\numberwithin{equation}{section}
\def\wh#1{\widehat{#1}}
\def\wc#1{\widecheck{#1}}
\def\ti#1{\widetilde{#1}}
\def\ov#1{\overline{#1}}
\def\ud#1{\underline{#1}}
\def\tn{\textnormal}
\def\ts{\textsf}
\def\inn{\!\in\!}
\def\eq{\!=\!}
\def\bsl{\backslash}
\def\lra{\longrightarrow}
\def\prt{\partial}
\def\lr#1{\langle{#1}\rangle}
\def\lbrp#1{\llbracket{#1}\rrparenthesis}
\def\lprp#1{\llparenthesis{#1}\rrparenthesis}
\def\mwt{\fM^{\tn{wt}}}
\def\mtd#1{\fM^{\tn{tf}}_{#1}}
\def\tmwt{\ti\fM^{\tn{wt}}}
\def\wt{\tn{wt}}
\def\Edg{\tn{Edg}}
\def\Ver{\tn{Ver}}
\def\Edg{\tn{Edg}}
\def\A{\mathbb A}
\def\cC{\mathcal C}
\def\sE{\mathscr E}
\def\fE{\mathfrak E}
\def\bfE{\mathbf E}
\def\bbI{\mathbb I}
\def\fL{\mathfrak L}
\def\nL{\mathsf L}
\def\fM{\mathfrak M}
\def\fN{\mathfrak N}
\def\sN{\mathscr N}
\def\sO{\mathscr O}
\def\P{\mathbb P}
\def\cP{\mathcal P}
\def\nR{\mathsf R}
\def\R{\mathbb R}
\def\fS{\mathfrak S}
\def\sT{\mathscr T}
\def\cU{\mathcal U}
\def\fU{\mathfrak U}
\def\cV{\mathcal V}
\def\Z{\mathbb Z}
\def\cZ{\mathcal Z}
\def\sa{\mathsf a}
\def\sd{\mathsf d}
\def\se{\mathsf e}
\def\fe{\mathfrak e}
\def\ff{\mathfrak f}
\def\bu{\mathbf u}
\def\sv{\mathsf v}
\def\sw{\mathsf w}
\def\bfw{\mathbf w}
\def\fz{\mathfrak z}
\def\Ga{\Gamma}
\def\al{\alpha}
\def\ga{\gamma}
\def\de{\delta}
\def\ka{\kappa}
\def\la{\lambda}
\def\si{\sigma}
\def\th{\theta}
\def\ve{\varepsilon}
\def\vs{\varsigma}
\def\ze{\zeta}
\def\cht{{\mathbf{m}}}
\def\tr{{\tau}}
\def\lt{{\mathfrak t}}
\def\td{\tn{tf}}
\def\tf{\tn{tf}}
\def\wt{\tn{wt}}
\newtheorem{thm}{Theorem}[section]
\newtheorem{prp}[thm]{Proposition} 
\newtheorem{lmm}[thm]{Lemma}  
\newtheorem{crl}[thm]{Corollary}
\newtheorem{dfn}[thm]{Definition}
\theoremstyle{remark}
\newtheorem{rmk}[thm]{Remark}
\title
{Moduli of  Curves of Genus One with Twisted Fields}
\date{}
\author{Yi Hu}
\address{Department of Mathematics, University of Arizona, USA.}
\email{yhu@math.arizona.edu}
\author{Jingchen Niu}
\address{Department of Mathematics, University of Arizona, USA.}
\email{jniu@math.arizona.edu}
\begin{document}

\begin{abstract} 
 We construct a smooth Artin stack
 parameterizing the stable weighted curves of genus one with twisted fields and  prove that it is isomorphic to the blowup stack of the moduli of genus one weighted curves studied by Hu and Li.
 This leads to a blowup-free construction of Vakil-Zinger's desingularization of the moduli of genus one stable maps to projective spaces. 
This construction provides the cornerstone of the theory of stacks with twisted fields,
which is thoroughly studied in~\cite{HN2} and leads to a blowup-free resolution of the stable map moduli of genus two.
\end{abstract}

\maketitle 
\section{Introduction}\label{Sec:Intro}
Moduli problems are of central importance in algebraic geometry. 
Many moduli spaces possess arbitrary singularities~\cite{V}.
Among them, the moduli $\ov M_g(\P^n,d)$ of degree $d$ stable maps from genus $g$ nodal curves into  projective spaces $\P^n$ 
are particularly important. 
We aim to resolve the singularities of $\ov M_g(\P^n,d)$,
that is, to construct a new Deligne-Mumford stack that has smooth irreducible components and normal crossing boundaries and dominates $\ov\fM_g(\P^n,d)$  properly and birationally onto the primary component (the component whose general points have smooth domain curves).
The problem of resolution of singularities is arguably among the hardest ones 
in algebraic geometry \cite{Hironaka64a, Hironaka64b, deJong96, K07}.  

The stable map moduli are smooth  if $g\eq 0$
and singular if $g\!\ge\!1$ and $n\!\ge\!2$.
For $g\eq 1$,  a resolution was constructed by Vakil and Zinger~\cite{VZ08}, 
followed by an algebraic approach of Hu and Li~\cite{HL10}. 
The latter is achieved by constructing a canonical smooth blowup 
$\ti\fM^\wt_1$ of the Artin stack 
$\mwt_1$ of weighted nodal curves of genus one.
The method of \cite{HL10} was further developed  in~\cite{HLN} to finally  establish
 a resolution in the case of $g\eq 2$.
 The resolution of~\cite{HLN} is achieved by constructing a canonical smooth blowup 
$\widetilde{\mathfrak P}_2$ of the relative Picard stack 
${\mathfrak P}_2$ of  nodal curves of genus two.

 In higher genus cases, the construction of a possible resolution of the stable map moduli 
may seem formidable. The constructions of the explicit resolutions in~\cite{VZ08,HL10,HLN}  rely on certain precise knowledge on the singularities of the moduli.
For arbitrary genus, it calls for a more abstract and geometric approach. 
As advocated by the first author,
every singular moduli space should admit a resolution which itself is also a moduli. 
Following this principle, 
we interpret the blowup stack $\ti\fM_1^\wt$ of~\cite{HL10} as a smooth
algebraic stack of stable weighted nodal curves of genus one with twisted fields,
and consequently, the  resolution $\ti M_1(\P^n,d)$ of $\ov M_1(\P^n,d)$
as a Deligne-Mumford stack of genus one stable maps with twisted fields. 
The results in this paper are the first step to tackle the arbitrary genus case.

The main theorem of this paper is the following:

\begin{thm}
\label{Thm:Main} There exits a smooth Artin stack $\mtd{1}$
 parameterizing the weighted nodal curves of genus one with twisted fields,
along with a universal family $\cC^\tf\lra\mtd 1$ 
and a proper and birational forgetful morphism $\varpi: \mtd 1\lra\mwt_1$.
Moreover, $\mtd{1}/ \mwt_1$ is isomorphic to
the blowup stack $\ti\fM_1^\wt/\mwt_1$.
\end{thm}

 We construct the strata of $\mtd 1$ and the forgetful map $\varpi$ in \S\ref{Sec:Set-theoretic}; see~(\ref{Eqn:MtdStrata}). 
We then glue the strata of $\mtd 1$ together using smooth charts in \S\ref{Sec:Moduli} and 
conclude that $\mtd 1$ is a smooth Artin stack and is birational to $\mwt_1$ in Corollary~\ref{Crl:MtdSmooth}.
The universal family $\cC^\tf\!\lra\!\mtd 1$ is described in Proposition~\ref{Prp:Stack}.
We finally show that $\mtd 1/\mwt_1$ is  isomorphic to $\ti\fM_1^\wt/\mwt_1$ in Proposition~\ref{Prp:Isomorphism},
which implies the properness of $\varpi$.
These results together establish Theorem~\ref{Thm:Main}.
 
We remark that a direct approach to the properness of $\varpi$ (i.e.~without the comparison with the blowup $\ti\fM^\wt_1/\mwt_1$) is provided in the proof of~\cite[Theorem~2.19($\mathsf{p_1}$)]{HN2},
in a more general setting.

We also point out that there should exist a groupoid, represented by  $\mtd{1}$, that sends any scheme~$S$ to the set of the flat families of stable weighted nodal curves of genus 1 with twisted fields over $S$ as in~(\ref{Eqn:family});
see Remark~\ref{Rmk:moduli} for some details.

According to~\cite{HL10},
the resolution $\ti M_1(\P^n,d)$ of $\ov M_1(\P^n,d)$ is given by
$$\ti M_1(\P^n,d)= \ov M_1(\P^n,d)\times_{\mwt_1}\ti\fM_1^\wt,$$
where 
$$\ov M_1(\P^n,d)\lra\mwt_1,\qquad
[C,\bu]\mapsto[C,c_1(\bu^*\sO_{\P^n}(1))]$$
and $\ti\fM^\wt_1\!\lra\!\mwt_1$ is the canonical blowup.
Analogously,
we take
$$
\ti M_1^\tf(\P^n,d):=
\ov M_1(\P^n,d)\times_{\mwt_1}\mtd 1,
$$
where $\mtd 1\!\lra\!\mwt_1$ is the forgetful morphism aforementioned.
Theorem~\ref{Thm:Main} then leads to the following conclusion immediately.

\begin{crl}
\label{Crl:Main}
$\ti M_1^\tf(\P^n,d)$ is a proper Deligne-Mumford stack and is isomorphic to
$\ti M_1(\P^n,d)$.
\end{crl}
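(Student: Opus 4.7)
The plan is to derive the corollary directly from Theorem~\ref{Thm:Main} via a base-change argument, with essentially no additional work beyond unpacking definitions. First I would recall that Theorem~\ref{Thm:Main} yields an isomorphism $\Phi\!:\!\mtd 1\!\to\!\ti\fM_1^\wt$ of algebraic stacks over $\mwt_1$, in the sense that $\Phi$ intertwines the forgetful morphism $\varpi\!:\!\mtd 1\!\to\!\mwt_1$ with the canonical blowup morphism $\ti\fM_1^\wt\!\to\!\mwt_1$. I would then base-change this isomorphism along the evaluation morphism $\ov M_1(\P^n,d)\!\to\!\mwt_1$, $[C,\bu]\mapsto[C,c_1(\bu^*\sO_{\P^n}(1))]$, which, by the universal property of $2$-categorical fiber products, produces an induced isomorphism
$$
\mathrm{id}\times\Phi\!:\!\ov M_1(\P^n,d)\times_{\mwt_1}\mtd 1\;\lra\;\ov M_1(\P^n,d)\times_{\mwt_1}\ti\fM_1^\wt.
$$
The left-hand side is by definition $\ov M_1^\tf(\P^n,d)$, and the right-hand side is by~\cite{HL10} the resolution $\ti M_1(\P^n,d)$. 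This establishes the asserted isomorphism.

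For the moduli interpretation, I would argue that since $\mtd 1$ is a moduli stack parametrizing weighted genus-one nodal curves endowed with a twisted field structure (by Proposition~\ref{Prp:Stack} of the paper), the fiber product $\ov M_1^\tf(\P^n,d)$ inherits a modular description: its $T$-points are triples consisting of a family of genus-one stable maps $(C/T,\bu\!:\!C\!\to\!\P^n)$ together with a twisted field structure on the weighted curve $(C,c_1(\bu^*\sO_{\P^n}(1)))$, compatible via the forgetful morphism $\varpi$. Because both factors of the fiber product are algebraic stacks and the morphisms to $\mwt_1$ are well defined, $\ov M_1^\tf(\P^n,d)$ is itself an algebraic stack, and its functor of points is precisely the one above.

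There is essentially no main obstacle: the content of the corollary is entirely absorbed by Theorem~\ref{Thm:Main}. The only mild verification is that the forgetful morphism $\varpi$ constructed in \S\ref{Sec:Set-theoretic} coincides (under $\Phi$) with the blowup morphism $\ti\fM_1^\wt\!\to\!\mwt_1$ appearing in the definition of $\ti M_1(\P^n,d)$, but this compatibility is built into the statement of Theorem~\ref{Thm:Main}. Thus the corollary is immediate, and this is the style in which I would present the proof: a short invocation of the theorem followed by a one-line base-change computation.
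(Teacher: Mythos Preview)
Your proposal is correct and matches the paper's approach exactly: the paper simply states that Corollary~\ref{Crl:Main} follows immediately from Theorem~\ref{Thm:Main}, and your base-change argument is precisely the (tacit) content of that remark. The compatibility of $\varpi$ with the blowup morphism that you flag is indeed established in Proposition~\ref{Prp:Isomorphism}, so nothing further is needed.
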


Via the above isomorphism and applying ~\cite{HL10}, 
one sees that the stack $\ti M_1^\tf(\P^n,d)$ provides a resolution of 
$\ov M_1(\P^n,d)$.
Nonetheless, without relating to  $\ti M_1(\P^n,d)$,
we can directly prove the resolution property of $\ti M_1^\tf(\P^n,d)$ by
investigating the local equations of $\ov M_1(\P^n,d)$ in~\cite{HL10} 
and their pullbacks to $\ti M_1^\tf(\P^n,d)$;
see Remark~\ref{Rmk:smooth}.
 
The methods and ideas of this paper are essential to the development in \cite{HN2} and forthcoming works.
Based on the construction of $\mtd 1$,
we introduce the theory of \ts{stacks with twisted fields} (\ts{STF}) in~\cite[Theorem~2.19]{HN2}.
To be somewhat more informative,
we work on a smooth stack $\fM$ that has a stratification indexed by a set $\Ga$ of graphs similar to~(\ref{Eqn:Mwt_strata}); see~\cite[Definition~2.15]{HN2}.
The graphs in $\Ga$ need not to come from the dual graphs as in~(\ref{Eqn:Mwt_strata}), 
but the stratification of $\fM$ should resemble~(\ref{Eqn:V_(I)}) locally.
Moreover, $\Ga$ need not to consist of trees,
but it should contain necessary information on the notion of the (weighted) level trees in Definition~\ref{Dfn:level_tree} so that we can add the twisted fields to the strata of $\fM$ parallel to~(\ref{Eqn:MtdStrat_and_sEcT}) and obtain a new stack $\mtd{}$; see~\cite[Definition~2.17]{HN2}.
Such $\mtd{}$ enjoys desirable properties as in Corollary~\ref{Crl:MtdSmooth} and Remark~\ref{Rmk:smooth}.
As an application of the STF theory,
in~\cite{HN2},
we construct a smooth Artin stack~$\mathfrak P_2^\tf$ of genus~2 nodal curves with line bundles and twisted fields, along with a proper and birational forgetful morphism $\mathfrak P_2^\tf\lra\mathfrak P_2$,
such that 
$$
 \ti M_2^\tf(\P^n,d)=\ov M_2(\P^n,d)\times_{{\mathfrak P}_2}\mathfrak P_2^\tf
   \lra \ov M_2(\P^n,d)
$$
provides a resolution.
Further, we expect that they can be extended to arbitrary genus,  as far as the existence of 
 moduli of nodal curves with twisted fields is concerned.
This is the main motivation of the current article.
 
In a related work ~\cite{RSPW},  D. Ranganathan, K. Santos-Parker, and J.Wise
provide a different modular perspective of  $\ti M_1(\P^n,d)$ using logarithmic geometry.

\textbf{Acknowledgments.} We would like to thank Dawei Chen, Qile Chen, and Jack Hall for the valuable discussions. 

\textbf{Convention.}
The subscript ``1'' of the relevant stacks indicating the genus appears only in \S\ref{Sec:Intro} and will be omitted starting \S\ref{Sec:Set-theoretic},
as we only deal with the genus 1 case in this paper.
In particular,
we will denote by
$$
\mwt\qquad\tn{and}\qquad
\mtd{}
$$
the aforementioned stacks $\mwt_1$ and $\mtd 1$, respectively.

\section{Set-theoretic descriptions}
\label{Sec:Set-theoretic}

In \S\ref{Subsec:Level Tree},
we discuss the combinatorics of the dual graphs of nodal curves and introduce the notion of the weighted level trees.
They will be used to define $\mtd{}$ set-theoretically in \S\ref{Subsec:twisted_fields}.

\subsection{Weighted level trees}
\label{Subsec:Level Tree}

Let
$\ga$ be a \ts{rooted tree},
i.e.~a connected finite graph that contains no cycles, along with a special vertex $o$, called the \ts{root}. 
The sets of the vertices and the edges of $\ga$ are denoted by
$$
 \Ver(\ga)\qquad\tn{and}\qquad
 \Edg(\ga),
$$
respectively. The set
$\Ver(\ga)$ is endowed with a partial order, called the \ts{tree order},
so that 
$v\!\succ\!v'$ if and only if $v\!\ne\!v'$ and $v$ belongs to a path between $o$ and~$v'$.
The root $o$ is thus the unique maximal element of $\Ver(\ga)$ with respect to the tree order.

For each $e\inn\Edg(\ga)$,
we denote by $v_e^\pm\inn\Ver(\ga)$ the endpoints of $e$ such that
$
 v_e^+\!\succ\!v_e^-.
$
Then, every vertex $v\inn\Ver(\ga)\bsl\{o\}$ corresponds to a unique 
$$
 e_v\in\Edg(\ga)\qquad
 \tn{satisfying}\qquad
 v_{e_v}^-=v.
$$
The tree order on $\Ver(\ga)$ induces a partial order on $\Edg(\ga)$, still called the \ts{tree order},
so that
$$
 e\succ e'\quad\Longleftrightarrow\quad
 v_e^-\succeq v_{e'}^+\,.
$$

We call a pair $\tr\eq(\ga,\bfw)$ consisting of a rooted tree $\ga$ and a function $$\bfw:\Ver(\ga)\lra\Z_{\ge 0}$$ a~\ts{weighted tree}.
For such $\tr$,
we write
$\Ver(\tr)\eq\Ver(\ga)$ and $\Edg(\tr)\eq\Edg(\ga)$.
The set of all the weighted trees is denoted by $\sT_\nR^\wt$.

We call a map $\ell\!:\Ver(\ga)\lra\mathbb R_{\le 0}$ satisfying
$$
 \ell^{-1}(0)\eq\{o\}
 \qquad\tn{and}
 \qquad
 \ell(v)\!>\!\ell(v')\ \ \tn{whenever}\ \ v\!\succ\!v'
$$
a \ts{level map}.
For each $i\inn\ell(\Ver(\ga))\bsl\{0\}$,
let 
\begin{equation}\label{Eqn:i^sharp}
i^\sharp=\min\big\{\,
k\inn\ell\big(\Ver(\ga)\big):\,
k\!>\!i\,\big\},
\end{equation}
i.e.~the level $i^\sharp$ is right ``above'' the level $i$;
see Figure~\ref{Fig:level_tree}.
We remark that a rooted tree along with a level map is called a {\it level graph} with the root as the unique {\it top level} vertex in~\cite[\S1.5]{BCGGM}.

\begin{dfn}\label{Dfn:level_tree}
We call the tuple
\begin{equation*}
 \lt=\big(\,\ga,\;
 \bfw\!:\Ver(\ga)\!\lra\!\Z_{\ge 0}\,,\;
 \ell\!:\Ver(\ga)\!\lra\!\mathbb R_{\le 0}\,\big)
\end{equation*}
a \ts{weighted level tree} if $(\ga,\bfw)\inn\sT_\nR^\wt$ and $\ell$ is a level map.
\end{dfn}

For every weighted level tree $\lt$ as above,
we write $\Ver(\lt)\eq\Ver(\ga)$ and $\Edg(\lt)\eq\Edg(\ga)$.
Set
\begin{align*}
 &\cht=\cht(\lt)=
 \max\big\{\ell(v):
 v\inn\Ver(\lt), \bfw(v)\!>\!0\big\}
 &&(\,\le 0\,),\\
 &\wh\Edg(\lt)=
 \big\{e\inn\Edg(\lt):
 \ell(v_e^+)\!>\!\cht\big\}
 && \big(\,\subset\,\Edg(\lt)\big).
\end{align*}
For any two levels $i,j\inn\R_{\le 0}$,
we write
\begin{equation}\label{Eqn:lrbr}
\lprp{i,j}_\lt=\ell\big(\Ver(\lt)\big)\!\cap\!(i,j),\quad
\lbrp{i,j}_\lt=
\ell\big(\Ver(\lt)\big)\!\cap\![i,j).
\end{equation}
For every $e\inn\wh\Edg(\lt)$,
let
$$
 \ell(e)=\max\{\ell(v_e^-),\cht\}
 \quad\big(\in\lbrp{\cht,0}_\lt\big).
$$
For each level $i\inn\lbrp{\cht,0}_\lt$,
we set
\begin{equation}
\label{Eqn:fE_i}
\fE_i=\fE_i(\lt)=\big\{\,e\inn\wh\Edg(\lt):
 \ell(e)\!\le\!i\!<\!\ell(v_e^+)\,\big\}.
\end{equation}
In other words,
$\fE_i$ consists of all the edges crossing the gap between the levels~$i$ and $i^\sharp$.

We remark that all the notions in the preceding paragraph depend on the weighted level tree $\lt$,
although 
we may hereafter omit $\lt$ in any of such notions when the context is clear. 

Every weighted level tree $\lt$ determines a unique index set
\begin{equation}\begin{split}\label{Eqn:bbI}
 &\bbI(\lt) =
 \bbI_+(\lt)\sqcup\bbI_\cht(\lt)\sqcup\bbI_-(\lt),
 \hspace{.59in}\tn{where}\quad
 \bbI_+(\lt)\eq\lbrp{\cht,0}_\lt,\\
 &\bbI_\cht(\lt)\eq \{e\inn\wh\Edg(\lt)\!:\ell(v_e^-)\!<\!\cht\},\qquad
 \bbI_-(\lt)\eq
 \big(\Edg(\lt)\bsl\wh\Edg(\lt)\big).
\end{split}\end{equation}
The set $\bbI_+(\lt)$ becomes empty if $\cht\eq 0$,
i.e.~the root $o$ is positively weighted.
As mentioned before,
we may simply write
$$
 \bbI=\bbI(\lt),\quad
 \bbI_\pm=\bbI_\pm(\lt),\quad
 \bbI_\cht=\bbI_\cht(\lt)
$$
when the context is clear.

For each $I\!\subset\!\bbI$ (possibly empty), let
$$
 I_\cht=I\cap\bbI_\cht,\qquad
 I_\pm=I\cap\bbI_\pm.
$$
We construct a weighted level tree 
\begin{equation}\label{Eqn:t(I)}
 \lt_{(I)}=
 \big(\tau_{(I)},\ell_{(I)}\big)=
 \big(\,\ga_{(I)},\,\bfw_{(I)},\,
 \ell_{(I)}\,\big)
\end{equation}
as follows:
\begin{itemize}
[leftmargin=*]
\item 
the rooted tree $\ga_{(I)}$ is obtained via the edge contraction
$$
 \pi_{(I)}: \Ver(\ga)\twoheadrightarrow\Ver\big(\ga_{(I)}\big)
$$
such that the set of the contracted edges is
\begin{equation}\label{Eqn:edges_cntrd}
 \Edg(\lt)\bsl\Edg(\lt_{(I)})\eq
 \big\{e\inn\big(\wh\Edg(\lt)\bsl\bbI_\cht\big)\!\sqcup\! I_\cht\!:
 \lbrp{\ell(e),\ell(v_e^+)}_\lt\!\subset\!I_+\big\}\!\sqcup\!I_-;
\end{equation}
\item 
the weight function $\bfw_{(I)}$ is given by 
$$
 \bfw_{(I)}\!:\Ver\big(\ga_{(I)}\big)\lra\Z_{\ge 0},\qquad
 \bfw_{(I)}(v)=\!\!\sum_{v'\in\pi_{(I)}^{-1}(v)}
 \!\!\!\!\!\bfw(v');
$$

\item
the level map $\ell_{(I)}$ is such that for any $e\inn\Edg\big(\ga_{(I)}\big)~\big(\subset\!\Edg(\lt)\big)$,
$$
\ell_{(I)}(v_e^-)=
\begin{cases}
 \min\{i\inn\bbI_+\!\bsl I_+\!:\,
 i\!\ge\!\ell(v)~\forall\,v\inn\pi^{-1}_{(I)}(v_e^-)\} &
 \tn{if}~e\inn\wh\Edg(\lt)\!\bsl\bbI_\cht,
 \\
 \min(\bbI_+\!\bsl I_+) &
 \tn{if}~e\inn I_\cht,
 \\
 \max\{\ell(v)\!:\,
 v\inn\pi^{-1}_{(I)}(v_e^-)\} &
 \tn{if}~e\inn (\bbI_\cht\!\bsl I_\cht)\!\sqcup\!\bbI_-.
\end{cases}
$$
\end{itemize}
It is a direct check that $\tr_{(I)}$ is a weighted tree and $\ell_{(I)}$ satisfies the criteria of a level map,
hence~(\ref{Eqn:t(I)}) gives a well defined weighted level tree.

The construction of $\lt_{(I)}$ implies
\begin{equation}\label{Eqn:t_(I)_bbI}
\begin{split}
&
\bbI_+\big(\lt_{(I)}\big)=\bbI_+\bsl I_+,\qquad
\cht\big(\lt_{(I)}\big)=\min\big(
(\bbI_+\bsl I_+)\!\sqcup\!\{0\}\big),
\\
&\bbI_\cht\big(\lt_{(I)}\big)=
\big\{e\inn\bbI_\cht\bsl I_\cht\!:
\ell(v_e^+)\!>\!\cht\big(\lt_{(I)}\big)\big\},\\
&
\bbI_-\big(\lt_{(I)}\big)=\bbI_-\bsl I_-
\sqcup\big\{e\inn\bbI_\cht\bsl I_\cht\!:
\ell(v_e^+)\!\le\!\cht\big(\lt_{(I)}\big)\big\}
.
\end{split}
\end{equation}
Intuitively,
the weighted level tree $\lt_{(I)}$ is obtained from $\lt$ by contracting all the edges labeled by $I_-$,
then lifting all the vertices $v$ with $e_v\inn I_\cht$ to the level $\cht$,
and finally contracting all the levels in $I_+$.
Such $\lt_{(I)}$ will be used to describe the local structure of the stack $\mtd{}$ in \S\ref{Sec:Moduli}.

\begin{dfn}\label{Dfn:WLT_equiv}
Two weighted level trees $\lt\eq(\ga,\bfw,\ell)$ and $\lt'\eq(\ga',\bfw',\ell')$ are said to be \ts{equivalent}, written as
$
 \lt\sim\lt',
$
if 
\begin{enumerate}
[leftmargin=*,label=(E\arabic*)]
\item\label{Cond:WLT_equiv_WT} $(\ga,\bfw)\eq(\ga',\bfw')$ as weighted trees;
\item for any $v,w\inn\Ver(\ga)$ satisfying $\ell(v)\eq\ell(w)\!\ge\!\cht(\lt)$, we have
$$\ell'(v)=\ell'(w);$$
\item for any $v,w\inn\Ver(\ga)$ satisfying $\ell(v)\!>\!\ell(w)$ and $\ell(v)\!\ge\!\cht(\lt)$, we have
$$\ell'(v)>\ell'(w).$$
\end{enumerate}
\end{dfn}
It is a direct check that $\sim$ is an equivalence relation on the set of weighted level trees.
Intuitively, this equivalence relation records the relative positions of the vertices {\it above} or {\it in} the level $\cht(\lt)$; see Figure~\ref{Fig:level_tree} for illustration. 

\begin{figure}
\begin{center}
\begin{tikzpicture}{htb}
\draw[dotted]
 (0.4,0)--(5.6,0)
 (0.4,-.8)--(5.6,-.8)
 (0.4,-1.6)--(5.6,-1.6)
 (0.4,-2.4)--(5.6,-2.4);
\draw
 (3,0)--(1.8,-1.6)--(2.4,-2.4)
 (1.8,-1.6)--(1.8,-2.8)
 (1.8,-1.6)--(0.9,-2.8)
 (3,0)--(4,-.8)--(3.2,-2.4)
 (3.6,-1.6)--(4,-2.4)
 (4,-.8)--(5.2,-3.2)
 (5,-2.8)--(4.8,-3.2);
\draw[very thick]
 (3,0)--(1.8,-1.6)
 (3,0)--(4,-.8)
 (3.6,-1.6)--(4,-2.4);
\draw[fill=white]
 (3,0) circle (1.5pt)
 (4,-.8) circle (1.5pt)
 (3.6,-1.6) circle (1.5pt)
 (1.8,-1.6) circle (1.5pt)
 (5,-2.8) circle (1.5pt)
 (8.5,-2.4) circle (1.5pt);
\filldraw
 (3.2,-2.4) circle (1.5pt)
 (4,-2.4) circle (1.5pt)
 (.9,-2.8) circle (1.5pt)
 (1.8,-2.4) circle (1.5pt)
 (1.8,-2.8) circle (1.5pt)
 (2.4,-2.4) circle (1.5pt)
 (5.2,-3.2) circle (1.5pt)
 (4.8,-3.2) circle (1.5pt)
 (8.5,-2.8) circle (1.5pt);
\draw
 (5.8,0) node[right] {\scriptsize{$0=-1[1]=-2[1]=-3[2]=-1^\sharp$}}
 (5.8,-.8) node[right] {\scriptsize{$-1=-2^\sharp$}}
 (5.8,-1.6) node[right] {\scriptsize{$-2=-3[1]=-3^\sharp$}}
 (5.8,-2.4) node[right] {\scriptsize{$\cht=-3$}}
 (4.1,-2) node {\scriptsize{$\se_{-3}$}}
 (2.2,-0.6) node {\scriptsize{$\se_{-2}$}}
 (3.75,-.3) node {\scriptsize{$\se_{-1}$}}
 (3,0) node[above] {\scriptsize{$o=\sv_{-1}^+=\sv_{-2}^+$}}
 (4.2,-0.65) node {\scriptsize{$\sv_{-1}$}}
 (1.55,-1.4) node {\scriptsize{$\sv_{-2}$}}
 (3.4,-1.35) node {\scriptsize{$\sv_{-3}^+$}}
 (4,-2.4) node[below] {\scriptsize{$\sv_{-3}$}}
 (8.5,-2.4) node[right] {\scriptsize{$:\tn{vertex~of~weight}~0$}}
 (8.5,-2.8) node[right] {\scriptsize{$:\tn{vertex~of~positive~weight}$}};
\draw[dashed]
 (8.2,-2.1) rectangle (12.2,-3.1);
\end{tikzpicture}
\end{center}
\caption{A weighted level tree with chosen $\sv_{-1},\sv_{-2},$ and $\sv_{-3}$}\label{Fig:level_tree}
\end{figure}
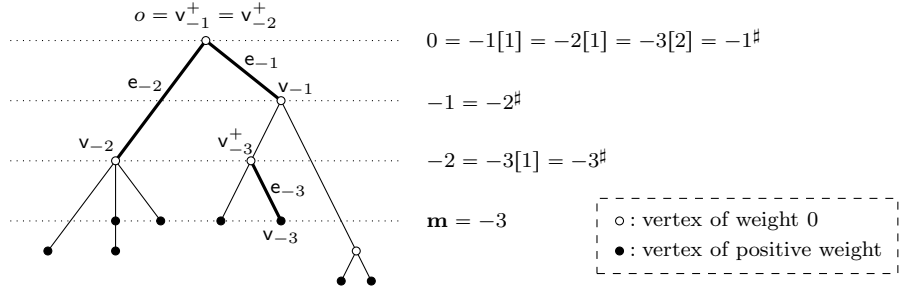  

We denote by
$\sT_\nL^\wt$ the set of the {\it equivalence classes} of the weighted level trees.
There is a natural forgetful map
\begin{equation}\label{Eqn:ff}
\ff:\sT_\nL^\wt\lra\sT_\nR^\wt,\qquad
[\ga,\bfw,\ell]\mapsto(\ga,\bfw),
\end{equation}
which is well defined by Condition~\ref{Cond:WLT_equiv_WT} of Definition~\ref{Dfn:WLT_equiv}.

If $\lt\!\sim\!\lt'$,
then $\cht(\lt)\eq\cht(\lt')$, and there exists a bijection
$$
\phi_{\lt',\lt}:\cP\big(\bbI(\lt)\big)\lra
\cP\big(\bbI(\lt')\big),\qquad
\phi_{\lt',\lt}(I)=\ell'\big(\ell^{-1}(I_+)\big)\sqcup I_{\cht(\lt)}\sqcup I_-\,,
$$
where $\cP(\cdot)$ denotes the power set.
The next lemma follows from  direct check.

\begin{lmm}\label{Lm:T^wt_equiv}
If $\lt\!\sim\!\lt'$,
then 
$\lt_{(I)}\!\sim\!\lt'_{(\phi_{\lt',\lt}(I))}$ for any $I\!\subset\!\bbI(\lt)$.
\end{lmm}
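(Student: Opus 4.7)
The plan is to verify the three clauses of Definition~\ref{Dfn:WLT_equiv} for $\lt_{(I)}$ and $\lt'_{(\phi_{\lt',\lt}(I))}$. I would start by observing that $\lt\!\sim\!\lt'$ already implies $(\ga,\bfw)\eq(\ga',\bfw')$, $\cht(\lt)\eq\cht(\lt')$, and consequently $\wh\Edg(\lt)\eq\wh\Edg(\lt')$, $\bbI_\cht(\lt)\eq\bbI_\cht(\lt')$, $\bbI_-(\lt)\eq\bbI_-(\lt')$: the critical level $\cht$ is determined by the positively-weighted vertex of greatest level, and ``greatest'' is well-defined modulo the equivalence thanks to Conditions (E2)--(E3). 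By its construction, $\phi_{\lt',\lt}$ acts as the identity on the $\bbI_\cht$ and $\bbI_-$ components and transports $\bbI_+(\lt)$ to $\bbI_+(\lt')$ by the order-preserving bijection $\ell'\!\circ\!\ell^{-1}$ on levels in $\lbrp{\cht,0}_\lt$.

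I would then establish (E1) by comparing the contracted edge sets recorded by \eqref{Eqn:edges_cntrd} for the two trees. For any $e\inn(\wh\Edg(\lt)\!\bsl\!\bbI_\cht)\!\sqcup\!I_\cht$, the condition $\lbrp{\ell(e),\ell(v_e^+)}_\lt\!\subset\!I_+$ is a statement about which levels in $\bbI_+(\lt)$ lie between two prescribed values at or above $\cht$; applying $\phi_{\lt',\lt}$ and using (E2)--(E3), this transports to the analogous condition $\lbrp{\ell'(e),\ell'(v_e^+)}_{\lt'}\!\subset\!\phi_{\lt',\lt}(I)_+$. Hence the two contracted edge sets coincide, so the underlying rooted graphs agree; the weight functions then automatically match because both are obtained by summing $\bfw$ over the fibers of the same contraction map.

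For (E2) and (E3), I would use \eqref{Eqn:t_(I)_bbI} to identify $\cht(\lt_{(I)})$ and $\cht(\lt'_{(\phi_{\lt',\lt}(I))})$ under $\phi_{\lt',\lt}$, and then verify, via the three-case formula defining $\ell_{(I)}$, that for every $v\inn\Ver(\ga_{(I)})$ with $\ell_{(I)}(v)\!\ge\!\cht(\lt_{(I)})$ the value $\ell_{(I)}(v)$ is computed from the restriction of $\ell$ to $\bbI_+\!\bsl\!I_+$; by the equivalence this restriction is order-isomorphic to the corresponding restriction of $\ell'$, which is precisely what (E2) and (E3) require. The least automatic piece is the ``lifted'' case $e\inn I_\cht$: here $v_e^-$ may sit strictly below $\cht$ in $\lt$, where $\sim$ imposes no constraint on the level. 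The way around this is to notice that the formula for $\ell_{(I)}$ assigns such vertices the common value $\min(\bbI_+\!\bsl\!I_+)$, which depends only on the order data already transported by $\phi_{\lt',\lt}$; so the potential mismatch below $\cht$ never appears in $\ell_{(I)}$, and the equivalence is preserved.
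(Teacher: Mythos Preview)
Your proposal is correct and follows exactly the approach the paper indicates: the paper's own proof is the single remark that the lemma ``follows from direct check,'' and your outline supplies the details of that check by verifying (E1)--(E3) of Definition~\ref{Dfn:WLT_equiv}. The one sub-case you leave implicit---(E3) when the lower vertex $w$ falls in the third branch of the $\ell_{(I)}$ formula and hence sits strictly below $\cht(\lt_{(I)})$---is handled by the same mechanism you invoke for $I_\cht$: such $w$ remains strictly below the new critical level on both sides, so the required inequality $\ell'_{(\phi_{\lt',\lt}(I))}(v)>\ell'_{(\phi_{\lt',\lt}(I))}(w)$ is automatic.
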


\subsection{Twisted fields}
\label{Subsec:twisted_fields}
For every genus 1 nodal curve $C$, 
its dual graph $\ga_C^\star$ has either a unique vertex $o$ corresponding to the genus 1 irreducible component of $C$ or a unique loop.
In the former case,
$\ga_C^\star$ can be considered as a rooted tree with the root~$o$;
in the latter case,
we contract the loop to a single vertex $o$ and 
obtain a rooted tree  with the root $o$.
Such defined rooted tree is denoted by $\ga_C$ and called the \ts{reduced dual tree} of $C$ (c.f.~\cite[\S3.4]{HL10}).
We call the minimal connected genus 1 subcurve of $C$ the \ts{core} and denote it by $C_o$.
Other irreducible components of $C$ are  smooth rational curves and denoted by $C_v$, $v\inn\Ver(\ga_C)\bsl\{o\}$.
For every incident pair $(v,e)$,
let 
\begin{equation}
\label{Eqn:Nodal}
q_{v;e}\in C_v
\end{equation} 
be the \ts{nodal point} corresponding to the edge $e$.

Let 
$
 \mwt
$
be
the Artin stack of genus 1 stable weighted curves introduced in~\cite[\S2.1]{HL10}.
Here the subscript ``1'' indicating the genus is omitted as per our convention.
The stack $\mwt$ consists of the pairs $(C,\bfw)$ of genus 1 nodal curves $C$ with non-negative weights $\bfw\inn H^2(C,\Z)$,
meaning that $\bfw(\Sigma)\!\ge\!0$ for all irreducible $\Sigma\!\subset\!C$.
Here $(C,\bfw)$ is said to be \ts{stable} if every rational irreducible component of weight $0$ contains at least three nodal points.
The weight of the core $\bfw(C_o)$ is defined as the sum of the weights of all irreducible components of the core.

Every $(C,\bfw)\inn\mwt$ uniquely determines a function
$$\bfw :\Ver(\ga_C)\lra\Z_{\ge 0},\qquad
v\mapsto \bfw(C_v),$$
which makes the pair $(\ga_C,\bfw)$ a weighted tree,
called the \ts{weighted dual tree}.
Thus, the stack $\mwt$ can  be stratified as 
\begin{equation}\label{Eqn:Mwt_strata}
 \mwt=\bigsqcup_{\tr\in\sT_\nR^\wt}
 \!\!\!\mwt_{\tr}~=
 \bigsqcup_{\tr\in\sT_\nR^\wt}
 \!\!\!\big\{\,
 (C,\bfw)\inn\mwt\!:(\ga_C,\bfw)\eq\tr \,
 \big\}.
\end{equation}
If the sum of the weights of all vertices is fixed,
the stability condition of $\mwt$ then guarantees there are only finitely many $\tr\inn\sT_\nR^\wt$ so that $\mwt_\tr$ is non-empty.

Given $\tr\eq(\ga,\bfw)\inn\sT_\nR^\wt$
and $e\inn\Edg(\tr)$,
let $$L_e^\pm\lra\mwt_\tr$$ be the line bundles whose fibers over a weighted curve $(C,\bfw)$ are the tangent vectors of the irreducible components $C_{v_e^\pm}$ at the nodal points $q_{v_e^\pm;e}$, respectively.
We take
\begin{equation}\label{Eqn:Le}
L_e= L_e^+\otimes L_e^-
\lra \mwt_\tr,\qquad
L_e^\succeq=
\!\!\!\!\!\!\bigotimes_{
\begin{subarray}{c}
 e'\in\Edg(\tr),\,
 e'\succeq e
\end{subarray}
}\!\!\!\!\!\!\!\!\!\!
L_{e'}\lra\mwt_\tr.
\end{equation} 

For any direct sum of line bundles $V\eq\oplus_mL_m'$ (over any base),
we write
$$
 \mathring\P(V):=
 \big\{
 \big(x,[v_m]\big)\inn\P(V):
 v_m\!\ne\!0~\forall\,m
 \big\}.
$$ 
For any morphisms $M_1,\ldots, M_k\!\lra\! S$,
we write
$$
 \prod_{1\le i\le k}
 \!(M_i/S) \,
 :=
 M_1\times_S M_2\times_S\cdots
 \times_S M_k.
$$ 
With notation as above,
given $\tr\inn\sT_\nR^\wt$ and $[\lt]\eq\big[\tr,\ell\big]\inn\sT_{\nL}^\wt$,
let  
\begin{equation}\begin{split}
\label{Eqn:MtdStrat_and_sEcT}
 \varpi:\mtd{[\lt]}=
 \bigg(
 \prod_{i\in\bbI_+(\lt)}\!\!\!
  \Big\lgroup\!
  \Big(\mathring\P\big(\!
   \bigoplus_{\!
   \begin{subarray}{c}
    e\in\Edg(\lt),\,
    \ell(v_e^-)=i
   \end{subarray}
   }\!\!\!\!\!\!\!\!\!\!\!
   L_e^\succeq\;
  \big)\!\Big)\Big/\mwt_\tr\Big\rgroup\!\!
 \bigg)
 &\lra\mwt_{\tr}\,,\\
 \sE_{[\lt]}=
 \bigg(
 \prod_{i\in\bbI_+(\lt)}
 \!\!\!
 \Big\lgroup
 \Big(\P\big(\!
 \bigoplus_{e\in\fE_i}L_e^\succeq
 \big)\!\Big)\Big/\mwt_\tr\Big\rgroup\!\!
 \bigg)
 &\lra\mwt_{\tr}\,,
\end{split}\end{equation}
where $\bbI_+(\lt)$, $L_e^\succeq$, and $\fE_i$ are as in~(\ref{Eqn:bbI}),~(\ref{Eqn:Le}), and~(\ref{Eqn:fE_i}), respectively.
It is straightforward that both bundles in~(\ref{Eqn:MtdStrat_and_sEcT}) are independent of the choice of the weighted level tree $\lt$ representing $[\lt]$.
Since
$$
 \big\{e\inn\Edg(\lt)\!:\ell(v_e^-)\eq i
 \big\}\subset\fE_i
 \qquad\forall~i\inn\bbI_+(\lt),
$$
we see that
$\mtd{[\lt]}$ is a subset of $\sE_{[\lt]}$.
In addition,
since each stratum $\mwt_\tr$ is an algebraic stack,
so are $\mtd{[\lt]}$ and $\sE_{[\lt]}$.

Using~(\ref{Eqn:MtdStrat_and_sEcT}) and~(\ref{Eqn:Mwt_strata}),
we define
\begin{equation}\label{Eqn:MtdStrata}
\mtd{}:=\bigsqcup_{[\lt]\in\sT_\nL^\wt}\!\!
\mtd{[\lt]}~\stackrel{\varpi}{\lra}~
\mwt.
\end{equation}
This is the set-theoretic definition of the proposed stack $\mtd{}$ as well as the forgetful map in Theorem~\ref{Thm:Main}.
For any $x\inn\mwt_\tr$, 
the points of the fiber $\mtd{[\lt]}\big|_x$ are called the \ts{twisted fields} over $x$.

\begin{rmk}
By~(\ref{Eqn:MtdStrata}),
$\ti M_1^\tf(\P^n,d)$ in Corollary~\ref{Crl:Main} consists of the tuples
$$
 \big(\,C,\,\bu,\,[\lt],\,\ud\eta\,\big),
$$
where $(C,\bu)$ are stable maps in $\ov M_1(\P^n,d)$,
$[\lt]$ are the equivalence classes of weighted level trees satisfying $\ff[\lt]\eq\big(\ga_C,c_1(\bu^*\sO_{\P^n}(1))\big)$,
and $\ud\eta$ are twisted fields over $\big(C,c_1(\bu^*\sO_{\P^n}(1))\big)$.
\end{rmk}

\section{The stack structure of $\mtd{}$}
\label{Sec:Moduli}
In \S\ref{Sec:Moduli}, we show $\mtd{}$ is naturally a  smooth Artin stack and describe its universal family.

\subsection{Twisted charts}\label{Subsec:Charts_of_Mtf}
We first fix
$[\lt]\eq\big[\ga,\bfw,\ell\big]\inn\sT_{\nL}^\wt
$ and $x\inn\mtd{[\lt]}$,
and write
$$
 \tr=\ff[\lt]=(\ga,\bfw)\in\sT_\nR^\wt,\qquad
 (C,\bfw)=\varpi(x)\in\mwt_\tr.
$$
Since $\mwt$ is smooth,
we take an affine smooth chart 
$$
 \cV=\cV_{\varpi(x)}\lra\mwt
$$ containing $(C,\bfw)$.

As in~\cite[\S4.3]{HL10} and~\cite[\S2.5]{HLN},
there exists a set of regular functions 
\begin{equation}\label{Eqn:modular_parameters}
 \ze_e\in\Ga(\sO_\cV)\quad\tn{with}\quad e\in\Edg(\ga),
\end{equation}
called the \ts{modular parameters},
so that for each $e\inn\Edg(\ga)$, the locus 
$$\cZ_e=\{\ze_e\eq 0\}\subset\cV$$ is the irreducible smooth Cartier divisor on $\cV$ where the node labeled by $e$ is not smoothed.
For any $I\!\subset\!\bbI\eq\bbI(\lt)$, let
\begin{align*}
 \cV_{(I)}^\circ
 &:=
 \big\{\,
 \ze_{e'}\!\ne\!0\!: e'\inn\big(\Edg(\ga)\bsl\Edg(\ga_{(I)})\big)\,\big\}&\subset\cV,&
 \\
 \cV_{\tr_{(I)}}\!
 &:=\cV_{(I)}^\circ\cap\big\{\,
 \ze_e\eq 0\!: e\inn\Edg\big(\ga_{(I)}\big)\,\big\}& \subset\cV_{(I)}^\circ\subset\cV.&
\end{align*}
Then, $\cV_{(I)}^\circ$ is an open subset of $\cV$. Shrinking $\cV$ if necessary,
we see that
\begin{equation}
\begin{split}
\label{Eqn:V_(I)}
\cV_{\tr_{(I)}}\in\pi_0\big(
\mwt_{\tr_{(I)}}\cap\cV
\big),
\end{split}
\end{equation}
where $\pi_0$ denotes the set of the connected components.
In particular, $\cV_{\tr_{(I)}}$ can be considered as a smooth chart of the stratum $\mwt_{\tr_{(I)}}$.
Rigorously, the sets $\bbI$ and $I$ depend on the choice of the weighted level tree $\lt$ representing $[\lt]$,
however, $\cV_{(I)}^\circ$ and $\cV_{\tr_{(I)}}$ are independent of such choice;
see Lemma~\ref{Lm:T^wt_equiv}.
 
Given a set of the modular parameters as in~(\ref{Eqn:modular_parameters}),
we may extend it to a set of local parameters on~$\cV$ centered at $(C,\bfw)$:
\begin{equation}\label{Eqn:local_parameters}
 \{\ze_e\}_{e\in\Edg(\ga)}\cup
 \{\vs_j\}_{j\in J}\qquad
 \tn{with}\quad
 (C,\bfw)=\ud 0:=(0,\ldots,0)\,,
\end{equation}
where $J$ is a finite set.
We do not impose other conditions on $\vs_j$.
 
For each $e\inn\Edg(\ga)$,
we set
$$
 \prt_{\ze_e}:=
 (\tn d\ze_e)^\vee
 \in\Ga(\cV; T\mwt).
$$
 
\begin{lmm}
\label{Lm:Prt_e}
For every $I\!\subset\!\bbI$ and every $e\inn\Edg(\ga_{(I)})$,
the restriction of $\prt_{\ze_e}$ to $\cV_{\tr_{(I)}}$ is a nowhere vanishing section of the restriction of the line bundle~$L_e$ in~(\ref{Eqn:Le}) to $\cV_{\tr_{(I)}}$.
\end{lmm}

\begin{proof}
Since the restriction of $\tn d\ze_e$ to $\cZ_e\eq\{\ze_e\eq 0\}$ ($\supset\!\cV_{\tr_{(I)}}$) is identically zero,
we observe that the restriction of $\prt_{\ze_e}$ to $\cZ_e$ is a nowhere vanishing section of the normal bundle of $\cZ_e$.
It is a well-known fact of  the moduli of curves that the normal bundle of $\cZ_e$ is $L_e$;
see~\cite[Proposition~3.31]{HM}.
\end{proof}

For each level $i\inn\bbI_+\eq\bbI_+(\lt)$,
we choose a special vertex
\begin{equation}\label{Eqn:sv_i}
\sv_i\inn\Ver(\ga)\qquad\tn{s.t.}\qquad\ell(\sv_i)\eq i.
\end{equation}
We then denote by $\se_i$, $\se_i^+$, and $\sv_i^+$ respectively the edges and the vertex satisfying
\begin{equation}\label{Eqn:se_i}
 \se_i\eq e_{\sv_i},\qquad
 \sv_i^+=v_{\se_i}^+\,,\qquad
 \se_i^+=e_{\sv_i^+}.
\end{equation}
Each $i\inn\bbI_+$ determines a strictly increasing sequence
\begin{equation}\label{Eqn:i[h]}
 i[0]:= i~<~
 i[1]:= \ell(\sv_i^+)~<~
 i[2]:= \ell(\sv_{i[1]}^+)~<~\cdots\,.
\end{equation}
We would like to remark that $i[1]$ and $i^\sharp$ in~(\ref{Eqn:i^sharp}) need not to be the same;
see Figure~\ref{Fig:level_tree} for illustration.
This sequence is finite,
as there is a unique step~$h$ satisfying $i[h]\eq 0$.

By Lemma~\ref{Lm:Prt_e},
there exist $\la_e\inn\A$ with $e\inn\wh\Edg(\lt)$ so that
the fixed $x\inn\mtd{[\lt]}$ over $(C,\bfw)$ can uniquely be written as
\begin{equation}\label{Eqn:x_local_expression}
\begin{split}
 & x=
 \Big(\,
 \ud 0\;;\;\prod_{i\in\bbI_+}\Big[\la_e\!\cdot\!
 \big(\bigotimes_{\fe\succeq e}\!\prt_{\ze_{\fe}}|_{\ud 0}\big)
 : \ell(e)\eq i\Big]\,
 \Big),\qquad\tn{where}
 \\
 & 
 \la_e\!\ne\!0
 \ \ \forall\,e\inn\wh\Edg(\lt)\bsl\bbI_\cht,\quad
 \la_{\se_i}\eq 1\ \ \forall\,i\inn\bbI_+,
 \quad
 \la_e\eq 0\ \ \forall\,e\inn\bbI_\cht.
\end{split}
\end{equation}
Let 
$$
\fU_x\subset \A^{\bbI_+}\times \A^{\wh\Edg(\lt)\bsl\{\se_i:{i\in\bbI_+}\}}\times
\A^{\bbI_-}\times
\A^{J}
$$
be an open subset containing the point
\begin{equation}\label{Eqn:y_x}
 y_x:=
 \big(\ud 0,\,
 (\la_e)_{e\in\wh\Edg(\lt)\bsl\{\se_i:{i\in\bbI_+}\}}\,,
 \ud 0,\,
 \ud 0\big).
\end{equation}
The coordinates on $\fU_x$ are denoted by
\begin{equation}\label{Eqn:ti_cV}
\big(
(\ve_i)_{i\in\bbI_+},
(u_e)_{e\in\wh\Edg(\lt)\bsl\{\se_i:{i\in\bbI_+}\}},
(z_e)_{e\in\bbI_-},
(w_j)_{j\in J}
\big).
\end{equation}

For any $I\!\subset\!\bbI$,
we set
\begin{align*}
 \fU_{x;(I)}^\circ
 &\eq
 \big\{\;
 \ve_i\!\ne\!0~\forall\,i\inn I_+\;;~
 u_e\!\ne\!0~\forall\,e\inn I_\cht\;;~
 z_e\!\ne\!0~\forall\,e\inn I_-\,
 \big\}
 \subset\fU_x,
 \\
 \fU_{x;[\lt_{(I)}]}
 &\eq
 \fU_{x;(I)}^\circ\!\cap\!
 \big\{\,
 \ve_i\eq 0~\forall\,i\inn\bbI_+\!\bsl I_+;~
 u_{e}\eq 0~\forall\,e\inn \bbI_{\cht}\!\bsl I_\cht;~
 z_{e}\eq 0~\forall\,e\inn\bbI_-\!\bsl I_-
 \big\}.
\end{align*}
This gives rise to a stratification
\begin{equation}\label{Eqn:ti_V_strata}
\fU_x=\bigsqcup_{I\subset\bbI}\,\fU_{x;[\lt_{(I)}]}.
\end{equation}
 We remark that neither $\fU_x$ nor its stratification~(\ref{Eqn:ti_V_strata}) depends on the choice of the weighted level tree $\lt$ representing $[\lt]$,
 even though the sets $\bbI$ and $I$ depend on such choice.
 We also notice that
 $\fU_{x;(I)}^\circ$ is an open subset of $\fU_x$,
 but the strata $\fU_{x;[\lt_{(I)}]}$ are not open unless $I\eq \bbI$.

For each $i\inn\bbI_+$, we take
$$
u_{\se_i}:= 1.
$$
By~(\ref{Eqn:x_local_expression}), shrinking $\fU_x$ if necessary, we have 
\begin{equation}\label{Eqn:al_e_nonzero}
u_e\inn\Ga\big(\sO_{\fU_x}^*\big)\qquad
\forall~e\inn\wh\Edg(\lt)\bsl\bbI_\cht.
\end{equation}
With the local parameters
$\ze_e$ and~$\vs_j$ as in~(\ref{Eqn:local_parameters}),
we construct a morphism
$$
\th_x :\fU_x\lra\cV\quad(\lra\mwt)
$$ given by
\begin{equation}\begin{split}\label{Eqn:theta_x}
&\th_x^*\ze_e=
\frac{u_e\cdot u_{\se_{\ell(e)}^+}\!\!\cdot u_{\se_{\ell(e)[1]}^+}\cdots}
{ u_{e_{v_e^+}}\!\!\cdot u_{\se^+_{\ell(v_e^+)}}\!\!\cdot u_{\se_{\ell(v_e^+)[1]}^+}\cdots}
\cdot \!
\prod_{
\begin{subarray}{c}
i\in\lbrp{\ell(e),\ell(v_e^+)}
\end{subarray}}
\!\!\!\!\!\!\!\!\ve_i
\qquad\forall~e\inn\wh\Edg(\lt);
\\
&\th_x^*\ze_e= z_e\quad\forall~e\inn\bbI_-\eq\Edg(\lt)\bsl\wh\Edg(\lt);
\qquad
\th_x^*\vs_j=w_j\quad\forall~j\inn J.
\end{split}\end{equation}
The numerator and the denominator in the first line of~(\ref{Eqn:theta_x}) are both finite products,
because~(\ref{Eqn:i[h]}) is always a finite sequence.

For any $I\!\subset\!\bbI$,
it follows from~(\ref{Eqn:al_e_nonzero}) and~(\ref{Eqn:theta_x}) that
\begin{equation}\label{Eqn:th_{x;(I)}}
 \th_x\big(\fU_{x;{(I)}}^\circ\big)\subset\cV_{{(I)}}^\circ,\qquad
 \th_x\big(\fU_{x;[\lt_{(I)}]}\big)\subset\cV_{\tr_{(I)}},
\end{equation}
where $\cV_{{(I)}}^\circ$ and $\cV_{\tr_{(I)}}$ are described before~(\ref{Eqn:V_(I)}).
 
Fix $I\!\subset\!\bbI$ ($I$ may be empty).
With 
$$
 [\lt_{(I)}]=[\tau_{(I)},\ell_{(I)}]\in\sT_\nL^\wt
$$ as in~(\ref{Eqn:t(I)}), $\mtd{[\lt_{(I)}]}$ as in~(\ref{Eqn:MtdStrata}),
and the chart $\cV_{\tr_{(I)}}\!\lra\!\mwt_{\tr_{(I)}}$ as in~(\ref{Eqn:V_(I)}),
let 
$$
\Phi_{x;(I)} :\fU_{x;[\lt_{(I)}]}\lra
\cV_{\tr_{(I)}}\!\times_{\mwt_{\tr_{(I)}}}\!
\mtd{[\lt_{(I)}]}\quad
\big(\lra
\mtd{[\lt_{(I)}]}\big)
$$
be the morphism so that for any $y\inn\fU_{x;\lt_{(I)}}$, 
\begin{equation}\label{Eqn:Phi_x_(I)}
\Phi_{x;(I)}(y)\eq \Big(
\th_x(y);
\prod_{i\in\bbI_+\bsl I_+}\!\!\!
\Big[
\mu_{e;i;I}(y)\!\cdot\!
\big(\!
\bigotimes_{\fe\succeq e,~
\lbrp{\ell(\fe),\ell(v_{\fe}^+)}\not\subset I}
\hspace{-.4in}
\prt_{\ze_{\fe}}\big|_{\th_x(y)}
\big):e\inn\fE_i
\Big]
\Big),
\end{equation} 
where
\begin{align*}
\mu_{e;i;I} &=\!
\bigg(\!u_e\!\cdot\!
\frac{
  u_{\se_{\ell(e)}^+}\!\!\cdot\! u_{\se_{\ell(e)[1]}^+}\!\cdots
}{
u_{\se_{i}^+}\!\cdot u_{\se_{i[1]}^+}\!\cdots
}\bigg)\!
\Bigg(
\frac{
 \prod_{
\begin{subarray}{c}
\fe\succ \se_i;\,
\lbrp{\ell(\fe),\ell(v_{\fe}^+)}\subset I
\end{subarray}}\,
 \th_x^*\ze_{\fe}
 }{ 
\prod_{
\begin{subarray}{c}
 e'\succ e;\,
\lbrp{\ell(e'),\ell(v_{e'}^+)}\subset I
\end{subarray}}\,
\th_x^*\ze_{e'}
}\Bigg)\!
\Big(\!\!
\prod_{
\begin{subarray}{c}
h\in \lbrp{\ell(e),i}
\end{subarray}}
\!\!\!\!\!\ve_h\Big)
\end{align*}
for all $i\inn\bbI_+\bsl I_+$ and $e\inn\fE_i$.
Similar to~(\ref{Eqn:theta_x}),
the products in the first pair of parentheses above are both finite products.

By~(\ref{Eqn:th_{x;(I)}}), the description of $\cV_{(I)}^\circ$ above~(\ref{Eqn:V_(I)}), and~(\ref{Eqn:edges_cntrd}),
we see that
$$\mu_{e;i;I}\in\Ga\big(\sO_{\fU_{x;(I)}^\circ}\big)
\qquad
\forall\,I\!\subset\!\bbI,\,i\inn\bbI_+\bsl I_+,~e\inn\fE_i.
$$
Moreover, by~(\ref{Eqn:al_e_nonzero}),
\begin{equation}\label{Eqn:mu_vanishing}
 \mu_{e;i;I}|_{\fU_{x;[\lt_{(I)}]}}
 \begin{cases}
 =0 & \tn{if}\ \ \ell_{(I)}(v_e^-)<i,\\
 \in \Ga\big(\sO^*_{\fU_{x;[\lt_{(I)}]}}\big) 
 & \tn{if}\ \ \ell_{(I)}(v_e^-)=i.
 \end{cases}
\end{equation}
This, along with~(\ref{Eqn:th_{x;(I)}}), Lemma~\ref{Lm:Prt_e}, and (\ref{Eqn:MtdStrat_and_sEcT}), 
implies~$\Phi_{x;(I)}$ is well-defined.

The morphisms $\Phi_{x:(I)}$, $I\!\subset\!\bbI$, together determine 
$$\Phi_x:\fU_x\lra\mtd{},\qquad
\Phi_x(y)=\Phi_{x;(I)}(y)\quad\tn{if}~y\in\fU_{x;[\lt_{(I)}]}.$$
We remark that $\Phi_{x;(I)}$ and $\Phi_x$ are also independent of the choice of the weighted level tree $\lt$ representing $[\lt]$.
Moreover,
we observe that
\begin{equation}
\label{Eqn:Phi_onto}
 \Phi_x(y_x)=\Phi_{x;(\emptyset)}(y_x)=x
 \qquad
 \forall\,x\in\mtd{},
\end{equation}
where $y_x\inn\fU_{x}$ is given in~(\ref{Eqn:y_x}).

A priori $\Phi_x$ is just a {\it map}, for the set-theoretic definition~(\ref{Eqn:MtdStrata})  of $\mtd{}$ does not describe its stack structure, although each $\mtd{[\lt']}$ is a stack.
In~\S\ref{Subsec:transition_maps},
we will show such $\Phi_x$ patch together to endow $\mtd{}$ with a smooth stack structure.
Each $\Phi_x$ will hereafter be called a \ts{twisted chart  centered at~$x$ (lying over $\cV\!\lra\!\mwt$)},
although rigorously it becomes a {\it chart} of $\mtd{}$ only after Corollary~\ref{Crl:MtdSmooth} is established. 

\begin{lmm}
\label{Lm:Phi_x_inj}
For every $I\!\subset\!\bbI$,
$
 \Phi_{x;(I)}\!:\fU_{x;[\lt_{(I)}]}\!\lra\mtd{[\lt_{(I)}]}
$
of~(\ref{Eqn:Phi_x_(I)})
is an isomorphism to an open subset of $\mtd{[\lt_{(I)}]}$.
\end{lmm}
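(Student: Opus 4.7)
My plan is to exhibit $\Phi_{x;(I)}$ as an open immersion by constructing an explicit algebraic inverse on an appropriate open subset of the target. First, by~(\ref{Eqn:th_{x;(I)}}), the image of $\Phi_{x;(I)}$ lies in the fiber product $\cV_{\tr_{(I)}}\!\times_{\mwt_{\tr_{(I)}}}\!\mtd{[\lt_{(I)}]}$, which is an open substack of $\mtd{[\lt_{(I)}]}$ because $\cV_{\tr_{(I)}}\!\to\!\mwt_{\tr_{(I)}}$ is a smooth chart by~(\ref{Eqn:V_(I)}). Hence it suffices to prove $\Phi_{x;(I)}$ is an isomorphism onto an open subscheme of this fiber product.

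The key structural observation is that the chosen section $\se_i$ canonically normalizes the $i$-th projective factor for every $i\inn\bbI_+\!\bsl\!I_+$: a direct computation using $u_{\se_i}\eq 1$, the telescoping cancellation of the two $u_{\se_j^+}$-towers in the first factor of $\mu_{e;i;I}$ when $e\eq\se_i$, the cancellation of the two $\th_x^*\ze$-products in the middle factor (their index sets coincide when $e\eq\se_i$), and the empty-product convention $\prod_{h\in\lbrp{i,i}}\!\ve_h\eq 1$, yields the identity $\mu_{\se_i;i;I}\!\equiv\!1$ on $\fU_{x;[\lt_{(I)}]}$. Consequently, on the image of $\Phi_{x;(I)}$ the homogeneous coordinates in each factor $\P\bigl(\bigoplus_{e\in\fE_i(\lt_{(I)})}\!L_e^\succeq\bigr)^\circ$ represent honest scalar values $\mu_{e;i;I}(y)$, each carried by the trivialization $\bigotimes_{\fe\succeq e,\,\fe\in\Edg(\ga_{(I)})}\!\!\prt_{\ze_\fe}|_{\th_x(y)}$.

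Given this normalization, the inverse is built in three stages. From the base point $q\inn\cV_{\tr_{(I)}}$ I recover $w_j\eq\vs_j(q)$ for $j\inn J$ and $z_e\eq\ze_e(q)$ for $e\inn I_-$ (invertible on $\cV_{\tr_{(I)}}$ since $I_-$-edges are contracted in $\ga_{(I)}$ by~(\ref{Eqn:edges_cntrd})). Next, from the normalized projective data I solve for each $u_e$ with $e\inn(\wh\Edg(\lt)\!\bsl\!\{\se_i\!:\!i\inn\bbI_+\})\cup I_\cht$ by inverting the formula~(\ref{Eqn:Phi_x_(I)}) for $\mu_{e;i;I}$, proceeding inductively from the top level downward so that the already-recovered $u$-variables populate the upper $u_{\se_j^+}$-towers. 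Finally, I extract $\ve_i$ for $i\inn I_+$ from $\ze_{\se_i}(q)$ by rearranging~(\ref{Eqn:theta_x}) as a rational expression; this is legitimate because~(\ref{Eqn:edges_cntrd}) forces $\se_i$ to be contracted in $\ga_{(I)}$ precisely when $i\inn I_+$, so $\ze_{\se_i}(q)\!\ne\!0$ on $\cV_{\tr_{(I)}}$ in that case. The nonvanishing assertions in~(\ref{Eqn:al_e_nonzero}) together with the definition of $\fU_{x;(I)}^\circ$ ensure that every division in the inverse is regular on the corresponding open locus of the fiber product.

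The principal obstacle is the bookkeeping for this inductive reconstruction: the formulas~(\ref{Eqn:theta_x}) and~(\ref{Eqn:Phi_x_(I)}) weave together $u$'s, $\ve$'s, and pullbacks $\th_x^*\ze_\fe$ across multiple levels via telescoping products, so one must carefully justify the top-down ordering of extraction and verify regularity of the intermediate rational expressions. Once this is carried out, a direct substitution—combining $\mu_{\se_i;i;I}\!\equiv\!1$ with~(\ref{Eqn:theta_x})—shows that the constructed map is a two-sided inverse of $\Phi_{x;(I)}$, and the image is precisely the open subscheme of the fiber product cut out by the nonvanishing of the recovered $\ve_i$, $u_e$, and $z_e$, completing the proof.
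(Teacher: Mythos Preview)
Your overall strategy---construct an explicit algebraic inverse by unwinding the formulas level by level from the top down---is exactly the paper's approach, and your identification of the normalization $\mu_{\se_i;i;I}\equiv 1$ is correct and is the key to converting projective data into honest scalars.

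However, your recipe for the inverse contains a genuine error. You assert that ``(\ref{Eqn:edges_cntrd}) forces $\se_i$ to be contracted in $\ga_{(I)}$ precisely when $i\inn I_+$.'' This is false: by~(\ref{Eqn:edges_cntrd}), the edge $\se_i\inn\wh\Edg(\lt)\bsl\bbI_\cht$ is contracted if and only if $\lbrp{i,i[1]}_\lt\!\subset\!I_+$, which is strictly stronger than $i\inn I_+$ whenever $i[1]\!\ne\!i^\sharp$ (i.e.\ whenever $\sv_i^+$ skips at least one intermediate level). In that situation $\se_i$ survives in $\ga_{(I)}$, so $\ze_{\se_i}(q)\eq 0$ on $\cV_{\tr_{(I)}}$ and your proposed extraction of $\ve_i$ from $\ze_{\se_i}(q)$ is a division by zero. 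The correct move here is to observe that $\se_i$ then appears as a genuine edge of $\lt_{(I)}$ at level $\wh i\eq\min\big(\lbrp{i,i[1]}_\lt\bsl I_+\big)$, and to recover $\ve_i$ from the \emph{projective} coordinate $\mu_{\se_i}$ at that level (this is Case~2 in the paper's proof).

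The dual error afflicts your $u_e$-step. You propose to recover every $u_e$ from projective data, but an edge $e$ with $\lbrp{\ell(e),\ell(v_e^+)}_\lt\!\subset\!I_+$ is contracted in $\ga_{(I)}$ and hence carries no projective coordinate in $\mtd{[\lt_{(I)}]}$; its $u_e$ must instead be extracted from the now-nonvanishing modular parameter $\ze_e(q)$ via~(\ref{Eqn:theta_x}) (Case~B). Thus your clean trichotomy ``base point $\to(w_j,z_e)$; projective data $\to u_e$; remaining $\ze$'s $\to\ve_i$'' is too coarse: at every level and for every edge one must branch on whether the interval $\lbrp{\ell(e),\ell(v_e^+)}_\lt$ is contained in $I_+$, which is precisely the five-case induction (Cases 1--3 for $\ve_i$, Cases A--B for $u_e$) that the paper carries out.
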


\begin{proof}
For any $i\inn\bbI_+(\lt_{(I)})\eq\bbI_+\bsl I_+$,
notice that every edge in $\fE_i$ of the weighted level tree $\lt$ is not contracted in the construction of~$\lt_{(I)}$ (c.f.~(\ref{Eqn:edges_cntrd})).
Thus,
$$ 
 \fE_i\subset\Edg\big(\lt_{(I)}\big)\qquad
 \forall~i\inn\bbI_+(\lt_{(I)}).
$$
In particular,
the edges $\se_i$, $i\inn\bbI_+(\lt_{(I)})$, can be used as the special edges of~$\lt_{(I)}$.
For conciseness, let
\begin{equation*}
\begin{split}
 \bfE[\lt_{(I)}] :\!\!&=
 \wh\Edg(\lt_{(I)})\big\bsl\!\left(\big\{\se_i\!:i\in\bbI_+(\lt_{(I)})\big\}\sqcup\bbI_\cht(\lt_{(I)})\right)\\
 &=\!\!\!\!
 \bigsqcup_{i\in \bbI_+(\lt_{(I)})}
 \!\!\!\!\!\!
 \big\{e\inn\Edg(\lt_{(I)})\!:
 \ell_{(I)}(v_e^-)\eq i,\,e\!\ne\!\se_i
 \big\}
 \quad 
 \big(\subset\!\wh\Edg(\lt_{(I)})\!\subset\!\wh\Edg(\lt)\big);
\end{split}
\end{equation*}
see~(\ref{Eqn:t_(I)_bbI}) for notation.

Let $\{\ze_e\}_{e\in\Edg(\ga)}\!\cup\!\{\vs_j\}_{j\in J}$ be a set of the local parameters on $\cV$ centered at $\varpi(x)$ as in~(\ref{Eqn:local_parameters}).
By the definition of $\cV_{\tr_{(I)}}$ above~(\ref{Eqn:V_(I)}),
\begin{equation}\label{Eqn:V_(I)_parameters}
 \{\ze_e\}_{
 e\in\Edg(\lt)\bsl\Edg(\lt_{(I)})
 }
 \sqcup
 \{\vs_j\}_{j\in J}
\end{equation}
is a set of local parameters of $\cV_{\tr_{(I)}}$.

Recall that there exist $\la_e\inn\A^*$, $e\inn\wh\Edg(\lt)$, such that
$$
  x=
 \Big(\,
 \ud 0\;;\;\prod_{i\in\bbI_+}\Big[\la_e\!\cdot\!
 \big(\bigotimes_{\fe\succeq e}\prt_{\ze_{\fe}}|_{\ud 0}\big)
 : \ell(v_e^-)\eq i\Big]\,
 \Big)
$$ 
as in~(\ref{Eqn:x_local_expression}).
Let $U_{x;\bfE[\lt_{(I)}]}$ be  an open subset of $(\A^*)^{\bfE[\lt_{(I)}]}$ such that
$$
 (\la_e)_{e\in\bfE[\lt_{(I)}]}\,\in\,
 U_{x;\bfE[\lt_{(I)}]}\,\subset\,
 (\A^*)^{\bfE[\lt_{(I)}]}.
$$
The coordinates of $U_{x;\bfE[\lt_{(I)}]}$ are denoted by
$$
 (\,\mu_e\,)_{e\in\bfE[\lt_{(I)}]}.
$$
In addition, we set 
$$
 \mu_{\se_i}= 1\quad
 \forall~i\inn\bbI_+(\lt_{(I)}),\qquad
 \mu_e=0\quad\forall~e\inn\bbI_\cht(\lt_{(I)}).
$$
Thus, the function $\mu_e$ is defined for all $e\inn\wh\Edg(\lt_{(I)})$, and is nowhere vanishing on~$U_{x;\bfE(\lt_{(I)})}$ for all $e\inn\wh\Edg(\lt_{(I)})\bsl\bbI_\cht(\lt_{(I)})$.

The smooth chart $\cV_{\tr_{(I)}}\!\lra\!\mwt_{\tr_{(I)}}$ in (\ref{Eqn:V_(I)}) induces a smooth chart
$$
 \cU'_{x;[\lt_{(I)}]}:=\cV_{\tr_{(I)}}\times U_{x;\bfE[\lt_{(I)}]}
\lra\mtd{[\lt_{(I)}]}
$$
given by
\begin{equation*}\begin{split}
 &\big(\,
 \mathfrak z,\,
 (\mu_e)_{e\in\bfE(\lt_{(I)})}
 \big)
 \mapsto
 \Big(\mathfrak z\,,
 \prod_{i\in\bbI_+(\lt_{(I)})}
 \!\!\!\!
 \big[
 \mu_e\big(\!
 \bigotimes_{
 \fe\in\Edg(\lt_{(I)}),\,\fe\succeq e}
 \hspace{-.25in}
 \prt_{\ze_\fe}|_{\mathfrak z}
 \big):
 \ell_{(I)}\!(v_e^-)\eq i
 \big]
 \Big).
\end{split}\end{equation*}
We will construct a morphism 
\begin{equation}\label{Eqn:Psi_{x;(I)}}
 \Psi_{x;(I)}:\cU'_{x;[\lt_{(I)}]}\lra\fU_{x;[\lt_{(I)}]}
\end{equation}
such that $\Phi_{x;(I)}\!\circ\!\Psi_{x;(I)}$ and $\Psi_{x;(I)}\!\circ\!\Phi_{x;(I)}$ are both the identity morphisms,
which will then establish Lemma~\ref{Lm:Phi_x_inj}.

Given $\big(
 \mathfrak z,
 (\mu_e)_{e\in\bfE(\lt_{(I)})}
 \big)\inn\cU'_{x;\lt_{(I)}},$
we denote its image by
$$
 y:=\Psi_{x;(I)}\big(
 \mathfrak z,
 (\mu_e)_{e\in\bfE(\lt_{(I)})}
 \big)\in\fU_{x;[\lt_{(I)}]},
$$
which is to be constructed.
With the coordinates on $\fU_x$ as in~(\ref{Eqn:ti_cV}), we set
\begin{equation}\label{Eqn:y_other'}
 z_e(y)\eq \ze_e(\fz)\ \ \forall\,e\inn\bbI_-,\qquad
 w_j(y)\eq \vs_j(\fz)\ \ \forall\,j\inn J.
\end{equation}
By~(\ref{Eqn:V_(I)}),
we see that
\begin{equation}\begin{split}\label{Eqn:y_other}
 z_e(y)\eq\ze_e(\fz)\eq 0
 \qquad
 &\Longleftrightarrow\qquad
 e\in\bbI_-\bsl I_-\;\big(\subset\bbI_-(\lt_{(I)})\big).
\end{split}\end{equation}
The rest of the coordinates of $y$ are much more complicated;
we describe them by induction over the levels in $\bbI_+\eq\bbI_+(\lt)$.
More precisely,
we will show that
{\it
$\ve_i(y)$ with $i\inn\bbI_+$ and $u_e(y)$ with $e\inn\wh\Edg(\lt)\bsl\{\se_i\!:i\inn\bbI_+\}$  are all rational functions in $\ze_{e'}(\fz)$ and $\mu_{e''}$, satisfying }
\begin{equation}\label{Eqn:ve_u_e_inductive}
 \big\lgroup
 \ve_i(y)\eq 0~\Leftrightarrow~
 i\inn \bbI_+\bsl I_+
 \big\rgroup
 \quad
 \tn{and}\quad
 \big\lgroup
 u_e(y)\eq 0~\Leftrightarrow~
 e\inn\bbI_\cht\bsl I_\cht
 \big\rgroup.
\end{equation}
In particular,
(\ref{Eqn:ve_u_e_inductive}) and~(\ref{Eqn:y_other}) imply $y\inn\fU_{x;[\lt_{(I)}]}$,
i.e.~$\Psi_{x;(I)}$ is well-defined.

The base case of the induction is for the level
$$i_0:=\max \bbI_+(\lt).$$
We take
\begin{equation}\label{Eqn:v_e_base}
 \ve_{i_0}(y)=\ze_{\se_{i_0}}\!(\fz).
\end{equation}
By~(\ref{Eqn:V_(I)}), we see that $\ve_{i_0}(y)$ satisfies~(\ref{Eqn:ve_u_e_inductive}).
We take
$$
 u_{\se_{i_0}}(y)=1.
$$
For any $e\!\ne\!\se_{i_0}$ with $\ell(e)\eq i_0$, we set
\begin{equation}\label{Eqn:u_e_base}
 u_e(y)=
 \begin{cases}
  \mu_e
  \quad
 &\tn{if}~i_0\!\not\in\! I_+~\big(\,\tn{i.e.}~i_0\inn\bbI_+(\lt_{(I)}),~e\inn\bfE(\lt_{(I)})\,\big)
 \\
 \frac{\ze_e(\fz)}{\ze_{\se_{i_0}}\!\!(\fz)}
 &\tn{if}~i_0\inn I_+~\big(\,\tn{i.e.}~\se_{i_0}\inn\Edg(\lt)\bsl\Edg(\lt_{(I)})\,\big)
 \end{cases}.
\end{equation}
If $i_0\inn I_+$,
then by~(\ref{Eqn:V_(I)_parameters}) and~(\ref{Eqn:V_(I)}),
we have 
$\ze_e(\fz)\eq 0$ if and only if ($\cht\eq i_0$ and) $e\inn\bbI_\cht\bsl  I_\cht$.
If $i_0\!\not\in\!I_+$,
then $\mu_e\eq 0$ if and only if ($\cht\eq i_0$ and) $e\inn\bbI_\cht\bsl  I_\cht$.
We thus conclude that $u_e(y)$ satisfies~(\ref{Eqn:ve_u_e_inductive}) for all 
$e\inn\wh\Edg(\lt)$ with $\ell(e)\eq i_0.$
Moreover, such $\ve_{i_0}(y)$ and $u_e(y)$ are obviously rational functions in $\ze_{e'}(\fz)$ and $\mu_{e''}$. 
Hence, the base case is complete.

Next, for any $i\inn\bbI_+$, assume that
all $\ve_k(y)$ with $k\!>\!i$ and all $u_e(y)$ with $e\inn\wh\Edg(\lt)$ and $\ell(e)\!>\!i$ have been expressed as rational functions in $\ze_{e'}(\fz)$ and $\mu_{e''}$, satisfying~(\ref{Eqn:ve_u_e_inductive}).

For the level $i$, we first construct $\ve_i(y)$.
The construction is subdivided into three cases.

\textbf{Case 1}.
If $i\!\not\in\!I_+$,
then set
\begin{equation*}\label{Eqn:ve_i_induc_0}
 \ve_{i}(y)=0.
\end{equation*}
Obviously this satisfies~(\ref{Eqn:ve_u_e_inductive}).

\textbf{Case 2}.
If $i\inn I_+$ and $\lbrp{i,i[1]}\!\not\subset\!I_+$,
then $\se_i\inn\bfE(\lt_{(I)})$,
hence $\mu_{\se_i}\!\ne\!0$.
Let
$$
 \wh i:=\min\big(\lbrp{i,i[1]}\bsl I_+\big)
 ~ \in \bbI_+(\lt_{(I)}).
$$
Intuitively, $\wh i$ is the level containing the image of $\sv_i$ in $\lt_{(I)}$.
Thus, 
$$
 \ell_{(I)}(\se_i)=\wh i.
$$
Let $\ve_i(y)$ be given by
\begin{equation}\label{Eqn:ve_i_induc_1}
\begin{split}
 \mu_{\se_i}\eq 
 \ve_i(y)
 \!\cdot\!
 \frac{u_{\se_i^+}\!(y)\!\cdot\! u_{\se_{i[1]}^+}\!\!(y)\cdots}
 {u_{\se_{\wh i}^+}\!(y)\!\cdot\! u_{\se_{\wh i[1]}^+}\!\!(y)\cdots}
 \!\cdot\! 
 \frac{\prod_{\fe\succ \se_{\wh i},\,\lbrp{\ell(\fe),\ell(v_\fe^+)}_\lt\subset I}\ze_{\fe}(\fz)}
 {\prod_{e'\succ \se_i,\,\lbrp{\ell(e'),\ell(v_{e'}^+)}_\lt\subset I}\ze_{e'}\!(\fz)}\!\cdot\!\!\!
 \prod_{
 h\in\lprp{i,\wh i}_\lt}\!\!\!
 \ve_h(y).
\end{split}
\end{equation}
The inductive assumption implies that all $\ve_h(y)$ with $h\inn\lprp{i,\wh i}_\lt$, as well as all $u_{\se_{i[h]}^+}\!\!(y)$ and $u_{\se_{\wh i[h]}^+}\!\!(y)$ with $h\!\ge\!0$, are non-zero and are rational functions in $\ze_{e'}(\fz)$ and $\mu_{e''}$. 
By~(\ref{Eqn:V_(I)_parameters}) and~(\ref{Eqn:edges_cntrd}),
we also see that all $\ze_\fe(\fz)$ and $\ze_{e'}\!(\fz)$ in~(\ref{Eqn:ve_i_induc_1}) are non-zero.
Therefore, such defined 
$\ve_i(y)$ is a rational function in $\ze_{e'}(\fz)$ and $\mu_{e''}$, satisfying~(\ref{Eqn:ve_u_e_inductive}).

\textbf{Case 3}.
If  $\lbrp{i,i[1]}\!\subset\!I_+$ (hence $i\inn I_+$),
then we see $\se_i\inn\Edg(\lt)\bsl\Edg(\lt_{(I)})$;
c.f.~(\ref{Eqn:edges_cntrd}).
Intuitively, this means $\se_i$ is contracted in the construction of $\lt_{(I)}$.
By the description of $\cV_{\tr_{(I)}}$ above~(\ref{Eqn:V_(I)}),
we see that
$\ze_{\se_i}(\fz)\!\ne\!0$.
Let $\ve_i(y)$ be given by
\begin{equation}\begin{split}
\label{Eqn:ve_i_induc_2}
 \ze_{\se_i}(\fz)&=
 \ve_i(y)\cdot
 \!\!\!
 \prod_{
 h\in\lprp{i,\wh i}_\lt}\!\!\!
 \ve_h(y).
\end{split}\end{equation}
Mimicking the argument in Case 2,
we conclude that $\ve_i(y)$ is a rational function in $\ze_{e'}(\fz)$ and $\mu_{e''}$, satisfying~(\ref{Eqn:ve_u_e_inductive}).

Next, we construct $u_e(y)$ for $e\inn\wh\Edg(\lt)$ with $\ell(e)\eq i$. 
Set $$u_{\se_i}(y)= 1.$$
For $e\!\ne\!\se_i$, the construction is subdivided into two cases.

\textbf{Case A}.
If $\lbrp{i,\ell(v_e^+)}\!\not\subset\!I_+$,
then
$$
 e\in\bfE(\lt_{(I)})\!\sqcup\!\bbI_\cht(\lt_{(I)})
 ~\big(=\!\wh\Edg(\lt_{(I)})\bsl
 \{\se_i\!:i\inn\bbI_+(\lt_{(I)})\}\big),
$$
hence $\mu_e$ exists, and $\mu_e\eq 0$ if and only if $e\inn\bbI_\cht(\lt_{(I)})$.
In Case A,
since $\lbrp{i,\ell(v_e^+)}\!\not\subset\!I_+$,
(\ref{Eqn:t_(I)_bbI}) further implies
\begin{equation}\label{Eqn:u_e_induc_1'}
\mu_e\eq 0\qquad
\Longleftrightarrow\qquad
 e\inn\bbI_\cht\bsl I_\cht.
\end{equation}
Let
$$
 \ka=\ka_e:=\min\big(\lbrp{i,\ell(v_e^+)}\bsl I_+\big)\quad \big(\in \bbI_+(\lt_{(I)})\big).
$$
Since $\lbrp{i,\ka}\!\subset\!I_+$,
we have 
$$
 \ell_{(I)}(e)=\ka.
$$
Let $u_e(y)$ be given by
\begin{equation}\label{Eqn:u_e_induc_1}
 \mu_{e}\eq 
 u_e(y)
 \!\cdot\!
 \frac{u_{\se_i^+}\!(y)\!\cdot\! u_{\se_{i[1]}^+}\!\!(y)\cdots}
 {u_{\se_{\ka}^+}\!(y)\!\cdot\! u_{\se_{\ka[1]}^+}\!\!(y)\cdots}
 \!\cdot\! 
 \frac{\prod_{\fe\succ \se_{\ka},\,\lbrp{\ell(\fe),\ell(v_\fe^+)}_\lt\subset I}\ze_{\fe}(\fz)}
 {\prod_{e'\succ e,\,\lbrp{\ell(e'),\ell(v_{e'}^+)}_\lt\subset I}\ze_{e'}\!(\fz)}\!\cdot\!\!\!
 \prod_{
 h\in\lbrp{i,\ka}_\lt}\!\!\!\!
 \ve_h(y).
\end{equation}
We observe that if $i\!\not\in\! I_+$,
then $\ka\eq i$ and hence $\prod_{
 h\in\lbrp{i,\ka}_\lt}
 \ve_h(y)\eq 1$;
if $i\inn I_+$, then the previous construction of $\ve_i(y)$, along with the inductive assumption,
guarantees $\prod_{
 h\in\lbrp{i,\ka}_\lt}
 \ve_h(y)\!\ne\!0$.
Mimicking the argument of Case 2 of the construction of $\ve_i(y)$ and taking~(\ref{Eqn:u_e_induc_1'}) into account,
we see that $u_e(y)$ determined by~(\ref{Eqn:u_e_induc_1}) is a rational function in $\ze_{e'}(\fz)$ and $\mu_{e''}$, and it satisfies~(\ref{Eqn:ve_u_e_inductive}).

\textbf{Case B}.
If $\lbrp{i,\ell(v_e^+)}\!\subset\!I_+$,
then~(\ref{Eqn:edges_cntrd}) gives
\begin{equation}
\label{Eqn:u_e_induc_2'}
e\in\Edg(\lt_{(I)})\ \ \big(\tn{i.e.}~\ze_e(\fz)\eq 0\big)\quad
\Longleftrightarrow\quad
e\in\bbI_\cht\bsl I_\cht.
\end{equation}
Let $u_e(y)$ be given by
\begin{equation}\begin{split}
\label{Eqn:u_e_induc_2}
 \ze_e(\fz)
 =
 u_e(y)
 \cdot
 \frac{u_{\se_i^+}\!(y)\!\cdot\! u_{\se_{i[1]}^+}\!\!(y)\cdots}
 {u_{e_{v_e^+}}\!\!(y)\!\cdot\!
 u_{\se_{\ell(v_e^+)}^+}\!(y)\!\cdot\!
 u_{\se_{\ell(v_e^+)[1]}^+}\!\!(y)\cdots}
 \cdot\!\!
 \prod_{
 h\in\lbrp{i,\ell(v_e^+)}_\lt}\!\!\!\!
 \ve_h(y).
\end{split}\end{equation}
Once again,
mimicking the argument of Case A of the construction of $u_e(y)$, and taking~(\ref{Eqn:u_e_induc_2'}) as well as the description of $\cV_{\tr_{(I)}}$ right before~(\ref{Eqn:V_(I)}) into account,
we see that $u_e(y)$ determined by~(\ref{Eqn:u_e_induc_2}) is a rational function in $\ze_{e'}(\fz)$ and $\mu_{e''}$, and it satisfies~(\ref{Eqn:ve_u_e_inductive}).

The cases 1-3, A, and B together complete the inductive construction of~$\Psi_{x;(I)}$.
Moreover,
comparing
\begin{enumerate}
[leftmargin=*,label=$\bullet$]
\item (\ref{Eqn:y_other'}) with the second line of~(\ref{Eqn:theta_x}),

\item (\ref{Eqn:v_e_base}), the second case of~(\ref{Eqn:u_e_base}), (\ref{Eqn:ve_i_induc_2}), and~(\ref{Eqn:u_e_induc_2}) with the first line ~(\ref{Eqn:theta_x}), 

\item the first case of~(\ref{Eqn:u_e_base}),
(\ref{Eqn:ve_i_induc_1}), and~(\ref{Eqn:u_e_induc_1}) with the expressions of $\mu_{e;i;I}$ right after~(\ref{Eqn:Phi_x_(I)}),
\end{enumerate}
we observe that $\Psi_{x;(I)}$ is the inverse of $\Phi_{x;(I)}$.
\end{proof}

\begin{crl}
$\Phi_x\!:\fU_x\!\lra\!\mtd{}$ is injective.
\end{crl}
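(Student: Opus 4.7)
The approach is to exploit the stratifications of source and target and reduce the global injectivity to the stratum-wise injectivity already established in Lemma~\ref{Lm:Phi_x_inj}. Given $y_1,y_2\inn\fU_x$ with $\Phi_x(y_1)\eq\Phi_x(y_2)$, let $I_1,I_2\!\subset\!\bbI$ be the unique subsets with $y_k\inn\fU_{x;[\lt_{(I_k)}]}$ for $k\eq 1,2$ coming from~(\ref{Eqn:ti_V_strata}). By construction, each image $\Phi_x(y_k)\eq\Phi_{x;(I_k)}(y_k)$ lies in $\mtd{[\lt_{(I_k)}]}$. Since the strata $\mtd{[\lt']}$ are pairwise disjoint in the decomposition~(\ref{Eqn:MtdStrata}) of $\mtd{}$, the equality of images forces $[\lt_{(I_1)}]\eq[\lt_{(I_2)}]$ in $\sT_\nL^\wt$.

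The main step is then to verify that the assignment $I\!\mapsto\![\lt_{(I)}]$ is injective on $\cP(\bbI)$. Unwinding~(\ref{Eqn:t(I)}) and~(\ref{Eqn:edges_cntrd}), the equivalence class $[\lt_{(I)}]$ retains (a) the underlying weighted tree $(\ga_{(I)},\bfw_{(I)})$ by Condition~\ref{Cond:WLT_equiv_WT} of Definition~\ref{Dfn:WLT_equiv}, hence the set of contracted edges $\Edg(\lt)\setminus\Edg(\lt_{(I)})$ as a subset of $\Edg(\lt)$, and (b) the relative level structure among the vertices at or above $\cht(\lt_{(I)})$, by the conditions (E2)--(E3). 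From (a), intersecting with $\bbI_-$ immediately recovers $I_-\eq(\Edg(\lt)\setminus\Edg(\lt_{(I)}))\cap\bbI_-$. From (b) together with the defining formula for $\ell_{(I)}$ in~(\ref{Eqn:t(I)}), the induced level relation on $\pi_{(I)}(\Ver(\lt))$ compared against the original values of $\ell$ on $\Ver(\lt)$ pins down both $\bbI_+\setminus I_+$ and the set of edges in $\bbI_\cht$ that get lifted to $\cht(\lt_{(I)})$; this is precisely the content of the identifications~(\ref{Eqn:t_(I)_bbI}). Consequently $I_1\eq I_2$.

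Once $I_1\eq I_2\eq:I$, the points $y_1,y_2$ both lie in the common stratum $\fU_{x;[\lt_{(I)}]}$, and Lemma~\ref{Lm:Phi_x_inj} (which asserts $\Phi_{x;(I)}$ is an isomorphism onto an open subset, hence injective) yields $y_1\eq y_2$. The principal obstacle is the combinatorial bookkeeping in step~(b): the subtle case occurs when $\cht(\lt)\inn I_+$, in which case edges of $\bbI_\cht$ may additionally be lifted to the new core level, so one must jointly determine $I_+$ and $I_\cht$ from the pair of data in (a) and (b) rather than reading them off independently.
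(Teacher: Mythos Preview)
Your proof is correct and follows essentially the same approach as the paper's one-line argument, which simply cites Lemma~\ref{Lm:Phi_x_inj} together with the stratifications~(\ref{Eqn:ti_V_strata}) and~(\ref{Eqn:MtdStrata}). The only substantive difference is that you make explicit the combinatorial fact the paper leaves implicit: that the assignment $I\mapsto[\lt_{(I)}]$ is injective on $\cP(\bbI)$, so that distinct source strata $\fU_{x;[\lt_{(I)}]}$ land in distinct target strata $\mtd{[\lt_{(I)}]}$. Your recovery of $I_-$ from the contracted edges and of $I_+,I_\cht$ from the relative level data via~(\ref{Eqn:t(I)}) and~(\ref{Eqn:t_(I)_bbI}) is the right idea; the point that all the relevant vertices $\pi_{(I)}(v)$ for $v$ with $\ell(v)\in\bbI_+$ sit at or above $\cht(\lt_{(I)})$---hence their relative order is retained by the equivalence class---is what makes the reconstruction go through.
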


\begin{proof}
This follows from Lemma~\ref{Lm:Phi_x_inj} and the stratification~(\ref{Eqn:ti_V_strata}) and~(\ref{Eqn:MtdStrata})
directly.
\end{proof}

\subsection{Stack structure}
\label{Subsec:transition_maps}

In this subsection,
we will show the twisted charts $\Phi_x$ patch together to endow $\mtd{}$ with a smooth stack structure;
c.f.~Proposition~\ref{Prp:Chart_compatible} and Corollary~\ref{Crl:MtdSmooth}.
Note that a priori, $\Phi_{x}$ depends on the choices of the special vertices $\{\sv_i\}_{i\in\bbI_+}$~(\ref{Eqn:sv_i}) and of the local parameters~(\ref{Eqn:local_parameters}). 
Nonetheless,
Lemmas~\ref{Lm:Chart_indep_svi} and~\ref{Lm:Chart_compatible_ptws} below will guarantee that such choices do not affect the proposed stack structure of $\mtd{}$,
hence will make the proof of Proposition~\ref{Prp:Chart_compatible} more concise.

Let $\{\sw_i\!:i\inn\bbI_+\}$ be another set of the special vertices satisfying~(\ref{Eqn:sv_i}),
and $\sw_i^+$,~$\sd_i$, and $\sd_i^+$ be the analogues of $\sv_i^+$, $\se_i$, and $\se_i^+$ in~(\ref{Eqn:se_i}), respectively.
As in~(\ref{Eqn:i[h]}),
each level $i\inn\bbI_+$ similarly determines a {\it finite} sequence
$$
 i\lr 0=i<
 i\lr 1=\ell(\sw_i^+)<
 i\lr 2=\ell(\sw_{i\lr 1}^+)<\cdots.
$$
We take an open subset
$$
 \fU_x^\sa\subset\A^{\bbI_+}\times
 \A^{\wh\Edg(\lt)\bsl\{\sd_i:i\in\bbI_+\}}\times
 \A^{\bbI_-}\times\A^J
$$
with the coordinates 
$$
 \big(
(\de_i)_{i\in\bbI_+},
(u^\sa_e)_{e\in\wh\Edg(\lt)\bsl\{\sd_i:{i\in\bbI_+}\}},
(z^\sa_e)_{e\in\bbI_-},
(w^\sa_j)_{j\in J}
\big).
$$
as in~(\ref{Eqn:ti_cV}),
and then construct
$$
\th^\sa_x\!:\fU_x^\sa\lra\cV,\qquad
\mu^\sa_{e;i;I}\inn\Ga\big(\sO_{\fU_{x;(I)}^{\sa,\circ}}\big),\qquad
\Phi^\sa_x\!:\fU_x^\sa\lra\mtd{}
$$
parallel to~(\ref{Eqn:theta_x}) and~(\ref{Eqn:Phi_x_(I)}).
Let $\cU\eq\Phi_x^\sa(\fU_x^\sa)\!\cap\!\Phi_x(\fU_x)$.

\begin{lmm}
\label{Lm:Chart_indep_svi}
The transition map
$$
 (\Phi_x^\sa)^{-1}\!\circ\!\Phi_x:\,
\Phi_x^{-1}(\cU)\lra(\Phi_x^\sa)^{-1}(\cU)
$$ is an isomorphism.
\end{lmm}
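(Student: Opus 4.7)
The plan is to exhibit $(\Phi_x^\sa)^{-1}\!\circ\!\Phi_x$ explicitly as a regular morphism of schemes, constructed stratum by stratum and then globalized via explicit formulas that compare the two sets of normalizations.

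First, by Lemma~\ref{Lm:Phi_x_inj}, for each $I\!\subset\!\bbI$ both $\Phi_{x;(I)}$ and $\Phi^\sa_{x;(I)}$ are isomorphisms from their respective strata onto open subsets of the same $\mtd{[\lt_{(I)}]}$. Composing with the inverse $\Psi_{x;(I)}$ supplied by Lemma~\ref{Lm:Phi_x_inj} therefore gives, on the overlap, a stratum-wise isomorphism $\fU_{x;[\lt_{(I)}]}\cap\Phi_x^{-1}(\cU)\!\to\!\fU_{x;[\lt_{(I)}]}^\sa\cap(\Phi_x^\sa)^{-1}(\cU)$. It remains to assemble these stratum-wise maps into a single morphism of the whole overlaps, i.e.~to produce explicit regular formulas for $(\de_i, u^\sa_e, z^\sa_e, w^\sa_j)$ as functions of $(\ve_i, u_e, z_e, w_j)$ on $\Phi_x^{-1}(\cU)$.

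Since the smooth chart $\cV\!\to\!\mwt$ is fixed and both $\th_x$ and $\th_x^\sa$ land in it, the equality $\Phi_x(y)\eq\Phi_x^\sa(y')$ forces $\th_x(y)\eq\th_x^\sa(y')$ in $\cV$. Pulling back $\vs_j$ and the modular parameters $\ze_e$ for $e\inn\bbI_-$ immediately gives $w^\sa_j\eq w_j$ and $z^\sa_e\eq z_e$. For $e\inn\wh\Edg(\lt)$, equating $\th_x^*\ze_e$ with $\th_x^{\sa,*}\ze_e$ via (\ref{Eqn:theta_x}) produces a system of relations among $\ve_i,u_e,\de_i,u^\sa_e$. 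In addition, at each level $i\inn\bbI_+\!\bsl I_+$ the two projective factors of (\ref{Eqn:Phi_x_(I)}) describe the same point of $\P\bigl(\bigoplus_{e\in\fE_i} L_e^\succeq\bigr)$, so the tuples $(\mu_{e;i;I})_{e\in\fE_i}$ and $(\mu^\sa_{e;i;I})_{e\in\fE_i}$ differ by a common nonzero scalar, which is pinned down by the conventions $u_{\se_i}\!\equiv\!1$ and $u^\sa_{\sd_i}\!\equiv\!1$.

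With this data, I would solve the system inductively from the top level down, following the strategy of the proof of Lemma~\ref{Lm:Phi_x_inj}: each $\de_i$ is obtained from $\ve_i$ multiplied by a rational combination of $u$-coordinates along the path in $\ga$ joining the old and new special vertices, and each $u^\sa_e$ is obtained from $u_e$ by an analogous telescoping factor. All denominators that appear are products of $u_{e'}$ with $e'\inn\wh\Edg(\lt)\!\bsl\!\bbI_\cht$, which are invertible on $\fU_x$ by~(\ref{Eqn:al_e_nonzero}); hence the resulting formulas are regular on $\Phi_x^{-1}(\cU)$. The symmetric construction with the roles of $\{\sv_i\}$ and $\{\sw_i\}$ interchanged yields the inverse morphism, establishing that the transition is an isomorphism. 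The principal difficulty I anticipate is the combinatorial bookkeeping: passing between the two towers $\{\sv^+_{i[h]}\}$ and $\{\sw^+_{i\lr h}\}$ above each level $i$ generates long telescoping products whose regularity depends on cancellations that must be verified uniformly as $I$ ranges over $\cP(\bbI)$. Once these cancellations are organized, the isomorphism claim follows formally.
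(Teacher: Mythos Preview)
Your proposal is correct and follows essentially the same approach as the paper: write down explicit regular formulas for the transition and verify they define an isomorphism. The paper streamlines your plan by skipping the preliminary stratum-wise appeal to Lemma~\ref{Lm:Phi_x_inj} and instead presenting the global change of coordinates $g$ directly---with the very simple rule $g^*u^\sa_e=u_e/u_{\sd_{\ell(e)}}$ for the fiber coordinates and a telescoping product of $u$'s for $g^*\de_i$---and then checking $\th_x^\sa\circ g=\th_x$ and $g^*\mu^\sa_{e;i;I}=\mu_{e;i;I}/\mu_{\sd_i;i;I}$; your inductive derivation would arrive at exactly these formulas.
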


\begin{proof}
Let $g\!:\Phi_x^{-1}(\cU)\!\lra\!(\Phi_x^\sa)^{-1}(\cU)$ be the isomorphism  given by
\begin{gather*}
 g^*\de_i
 \eq
 \ve_i\!\cdot\! 
 \frac{u_{\se_i^+}\!\cdot\! u_{\se_{i[1]}^+}\!\cdots
 }{
 u_{\se_{i^\sharp}^+}\!\cdot\! u_{\se_{i^\sharp[1]}^+}\!\cdots}
 \frac{ u_{\sd_i}\!\cdot\! u_{\sd_{i\lr 1}}\!\cdots
 }{
 u_{\sd_{i^\sharp}}\!\cdot\!u_{\sd_{i^\sharp\lr 1}}\!\cdots}
 \frac{ u_{\sd_{i^\sharp}^+}\!\cdot\! u_{\sd_{i^\sharp\lr{1}}^+}\!\cdots}{ u_{\sd_i^+}\!\cdot\! u_{\sd_{i\lr 1}^+}\!\cdots}
 \qquad\forall\,i\inn\bbI_+,
 \\
 g^*u^\sa_e\eq \frac{u_e}{ u_{\sd_{\ell(e)}}}\ \forall\,e\inn
 \wh\Edg(\lt)\bsl\{\sd_i\}_{i\in\bbI_+}, \quad
 g^*z^\sa_e\eq z_e\ \forall\,e\inn\bbI_-,\quad
 g^*w_j^\sa\eq w_j\ \forall\,j\inn J.
\end{gather*}

The fact that $g$ is an isomorphism
can be shown by
constructing its inverse explicitly,
which is similar to the proof of Lemma~\ref{Lm:Phi_x_inj}, but is simpler.
The key point of the construction is that
$$
u_{\sd_i}=\frac{1}{g^*u^\sa_{\se_i}}\quad
\forall\,i\inn\bbI_+,\qquad
u_e=\frac{g^*u^\sa_e}{g^*u^\sa_{\se_i}}\quad
\forall\,e\inn\wh\Edg(\lt)\ \tn{with}\ \ell(e)\eq i,
$$
and each $\ve_i$ is a product of $g^*\de_i$ and a rational function of $u_e$ with $e\inn\wh\Edg(\lt)$.

It is a direct check that the isomorphism $g$ satisfies
$$
 \th_x^\sa\!\circ\!g\eq\th_x\qquad\tn{and}\qquad
 g^*\mu_{e;i;I}^\sa\eq
 \frac{
 \mu_{e;i;I}}{
 \mu_{\sd_i;i;I}}
 \quad
 \forall\,I\!\subset\!\bbI,~i\inn\bbI_+\bsl I_+,~e\inn\fE_i.
$$
Thus, $(\Phi_x^\sa)^{-1}\!\circ\!\Phi_x\eq g$ and hence is an isomorphism.
\end{proof}

Let $$\{\wh\ze_e\!:e\inn\Edg(\ga)\}\!\sqcup\!\{\wh\vs_j\!:j\inn J\}$$ be another set of extended modular parameters centered at $x$ on the same chart $\cV\!\lra\!\mwt$; see~(\ref{Eqn:local_parameters}).
We use this set of local parameters to construct another twisted chart $\wh\Phi_x\!:\wh\fU_x\!\lra\!\mtd{}$;
in particular,
we have $\wh\th_x\!:\wh\fU_x\!\lra\!\cV$ and $\wh\mu_{e;i;I}\inn\Ga\big(\sO_{\wh\fU_{x;(I)}^\circ}\big)$ as in~(\ref{Eqn:theta_x}) and~(\ref{Eqn:Phi_x_(I)}), respectively.
Parallel to~(\ref{Eqn:ti_cV}), the coordinates on $\wh\fU_x$ are denoted by
$$
 \big(
 (\wh\ve_i)_{i\in\bbI_+},
(\wh u_e)_{e\in\wh\Edg(\lt)\bsl\{\se_i:{i\in\bbI_+}\}},
(\wh z_e)_{e\in\bbI_-},
(\wh w_j)_{j\in J}
 \big).
$$
Let $\cU\eq\Phi_x(\fU_x)\!\cap\!\wh\Phi_x(\wh\fU_x)$.

\begin{lmm}
\label{Lm:Chart_compatible_ptws}
The transition map
$$
 (\wh\Phi_x)^{-1}\!\circ\!\Phi_x:\,
\Phi_x^{-1}(\cU)\lra\wh\Phi_x^{-1}(\cU)
$$ is an isomorphism.
\end{lmm}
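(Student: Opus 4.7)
\emph{Plan.} The plan is to construct the transition map $g$ explicitly on coordinates, following the strategy of Lemma~\ref{Lm:Chart_indep_svi}, and show its inverse arises symmetrically by interchanging the roles of $\ze$ and $\wh\ze$. The key initial observation is that since both $\ze_e$ and $\wh\ze_e$ are regular functions cutting out the same irreducible smooth Cartier divisor $\cZ_e\!\subset\!\cV$, after possibly shrinking $\cV$ we have
$$\wh\ze_e\eq h_e\cdot\ze_e,\qquad h_e\inn\Ga(\sO_\cV^*),\ \ e\inn\Edg(\ga),$$
and each $\wh\vs_j$ is a regular function in $\{\ze_e\}\!\sqcup\!\{\vs_j\}$ vanishing at $\ud 0$ with invertible Jacobian, since both parameter sets yield coordinate systems on $\cV$ centered at $\varpi(x)$.

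From $\tn d\wh\ze_e\eq h_e\tn d\ze_e\!+\!\ze_e\tn dh_e$ we obtain $\prt_{\wh\ze_e}|_{\cZ_e}\eq h_e^{-1}|_{\cZ_e}\!\cdot\!\prt_{\ze_e}|_{\cZ_e}$, so the trivialization of $L_e^\succeq\!=\!\bigotimes_{\fe\succeq e}L_\fe$ at $\ud 0$ used in~(\ref{Eqn:x_local_expression}) rescales by $\prod_{\fe\succeq e}h_\fe^{-1}|_{\ud 0}$. Normalizing each projective factor so that the $\se_i$-coordinate remains~$1$ forces, on $\fU_x$, the definitions
\begin{gather*}
 g^*\wh z_e\eq(h_e\!\circ\!\th_x)\!\cdot\!z_e\ \ \forall\,e\inn\bbI_-,\qquad
 g^*\wh w_j\eq\th_x^*\wh\vs_j\ \ \forall\,j\inn J,\\
 g^*\wh u_e\eq u_e\!\cdot\!\frac{\prod_{\fe\succeq e}(h_\fe\!\circ\!\th_x)}{\prod_{\fe\succeq\se_{\ell(e)}}(h_\fe\!\circ\!\th_x)}\qquad\forall\,e\inn\wh\Edg(\lt)\bsl\{\se_i\!:i\inn\bbI_+\},
\end{gather*}
with $g^*\wh\ve_i$ uniquely determined by requiring $g^*\wh\th_x^*\wh\ze_{\se_i}\eq(h_{\se_i}\!\circ\!\th_x)\!\cdot\!\th_x^*\ze_{\se_i}$ for each $i\inn\bbI_+$.

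With $g$ so defined, the identity $\wh\th_x\!\circ\!g\eq\th_x$ holds by direct substitution in~(\ref{Eqn:theta_x}), and the equality $\wh\Phi_x\!\circ\!g\eq\Phi_x$ on each stratum $\fU_{x;[\lt_{(I)}]}$ of~(\ref{Eqn:ti_V_strata}) reduces to matching the rational coefficients $\mu_{e;i;I}$ of~(\ref{Eqn:Phi_x_(I)}): the two constructions differ precisely by the predicted products of $h_\fe\!\circ\!\th_x$, and the telescoping numerators and denominators of $\mu_{e;i;I}$ absorb exactly this rescaling. Applying the same construction with $\ze$ and $\wh\ze$ interchanged produces a map inverse to $g$. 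The main obstacle will be the bookkeeping: the scaling factor $\prod_\fe h_\fe$ must be propagated consistently through all telescoping products, and the formulas for $g^*\wh\ve_i$ must be verified regular and non-vanishing at every point of $\Phi_x^{-1}(\cU)$; this verification parallels the inductive analysis in the proof of Lemma~\ref{Lm:Phi_x_inj}, with the induction over $\bbI_+$ handling the analogues of Cases~1--3 and~A--B mutatis mutandis.
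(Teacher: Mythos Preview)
Your proposal is correct and follows essentially the same route as the paper: both write $\wh\ze_e=f_e\ze_e$ with $f_e$ a unit, compute $\prt_{\wh\ze_e}|_{\cZ_e}=f_e^{-1}\prt_{\ze_e}|_{\cZ_e}$, and then build the transition map $g$ explicitly so that $\wh\th_x\circ g=\th_x$ and the coefficients $\mu_{e;i;I}$ rescale compatibly. One simplification the paper makes is to write down the closed form $g^*\wh\ve_i=\ve_i\cdot\dfrac{f_{\se_i}f_{\se_{i[1]}}\cdots}{f_{\se_{i^\sharp}}f_{\se_{i^\sharp[1]}}\cdots}$ directly, so no recourse to the inductive analysis of Lemma~\ref{Lm:Phi_x_inj} is needed and invertibility is read off immediately; conversely, your formulas $g^*\wh z_e=(h_e\!\circ\!\th_x)z_e$ and $g^*\wh w_j=\th_x^*\wh\vs_j$ are the precise ones required for $\wh\th_x\circ g=\th_x$ to hold literally, where the paper records the simplified $g^*\wh z_e=z_e$, $g^*\wh w_j=w_j$.
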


\begin{proof}
By Lemma~\ref{Lm:Chart_indep_svi},
it suffices to use the same set of the special vertices $\{\sv_i\}_{i\in\bbI_+}$ for both $\Phi_x$ and $\wh\Phi_x$. For any $e\inn\Edg(\ga)$, 
the local parameters $\wh\ze_e$ and $\ze_e$ defines the same locus $\cZ_e\eq \{\ze_e\eq 0\}\eq \{\wh\ze_e\eq 0\}$,
hence
there exists $f_e\inn\Ga(\sO_\cV^*)$ such that
$$
 \wh\ze_e=f_e\cdot \ze_e.
$$
Therefore, we have
\begin{equation}
\label{Eqn:different_modular_parameters}
 \prt_{\wh\ze_e}|_{\cZ_e}=
 \big(\tfrac{1}{f_e}\prt_{\ze_e}\big)\big|_{\cZ_e}.
\end{equation}

Let $g\!:\Phi_x^{-1}(\cU)\!\lra\!\wh\Phi_x^{-1}(\cU)$ be the isomorphism given by
\begin{gather*}
g^*\wh\ve_i\eq
\ve_i\!\cdot\!\frac{f_{\se_i}\!\cdot\!f_{\se_{i[1]}}\!\cdots
 }{
 f_{\se_{i^\sharp}}\!\cdot\!f_{\se_{i^\sharp[1]}}\!\cdots}~
 \forall\,i\inn\bbI_+,\qquad
g^*\wh z_e\eq z_e~\forall\,e\inn\bbI_-,\qquad
 g^*\wh w_j\eq w_j~\forall\,j\inn J,
\\
g^*\wh u_e\eq 
u_e\!\cdot\!\frac{\prod_{e'\succeq e}f_{e'}
}{
\prod_{e'\succeq \se_{\ell(e)}}f_{e'}}
\quad
\forall\,e\inn\wh\Edg(\lt)\bsl\{\se_i:i\inn\bbI_+\}.
\end{gather*}
The explicit expression of $g$ above implies it is invertible;
see the parallel argument in the proof of Lemma~\ref{Lm:Chart_indep_svi}.

It is a direct check that 
$
 \wh\th_x\!\circ\!g\eq\th_x
$
and 
$$
 \frac{
 g^*\wh\mu_{e;i;I}
 }{
 \prod_{\fe\succeq e,\,
 \lbrp{\ell(\fe),\ell(v_\fe^+)}\not\subset I}
 f_{\fe}}
 =
 \frac{
 \mu_{e;i;I}
 }{
 \prod_{\fe\succeq\se_{i},\,
 \lbrp{\ell(\fe),\ell(v_\fe^+)}\not\subset I}
 f_{\fe}}\quad
 \forall\,I\!\subset\!\bbI,~i\inn\bbI_+\bsl I_+,~e\inn\fE_i.$$
Taking~(\ref{Eqn:different_modular_parameters}) into account,
we conclude that $(\wh\Phi_x)^{-1}\!\circ\!\Phi_x\eq g$ and hence is an isomorphism.
\end{proof}
 
Given $I\!\subset\!\bbI$ and
$x'\inn\Phi_x\big(\fU_{x;[\lt_{(I)}]}\big)$,
let $\cV_{\varpi(x')}\!\lra\!\mwt$ be 
a chart containing $\varpi(x')$ and $\Phi_{x'}\!:\fU_{x'}\!\lra\!\mtd{}$ be a twisted chart centered at $x'$ over $\cV_{\varpi(x')}\!\lra\!\mwt$.
Let $\cU\eq\Phi_x(\fU_x)\!\cap\!\Phi_{x'}(\fU_{x'}).$

\begin{prp}
\label{Prp:Chart_compatible}
The transition map
$$
 \Phi_{x'}^{-1}\circ\Phi_x:\,
 \Phi_x^{-1}(\cU)\lra\Phi_{x'}^{-1}(\cU)
$$ 
is an isomorphism.
\end{prp}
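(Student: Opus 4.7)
The plan is to reduce the proposition to the explicit inductive inversion carried out in the proof of Lemma~\ref{Lm:Phi_x_inj}, after normalizing the auxiliary data of the two twisted charts using Lemmas~\ref{Lm:Chart_indep_svi} and~\ref{Lm:Chart_compatible_ptws}.

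First I would reduce to the case $\cV_{\varpi(x)}\!\eq\!\cV_{\varpi(x')}\!\eq\!\cV$.  Since $\mwt$ is a smooth Artin stack, after shrinking both charts one obtains a common smooth affine refinement $\cV''$ equipped with smooth maps to $\cV_{\varpi(x)}$ and $\cV_{\varpi(x')}$; the twisted-chart construction~(\ref{Eqn:theta_x})--(\ref{Eqn:Phi_x_(I)}) is compatible with smooth base change because it only involves pulled-back modular parameters and pulled-back line bundles.  An argument modeled on Lemma~\ref{Lm:Chart_compatible_ptws}, treating a smooth base change as a simultaneous change of modular and auxiliary parameters, then identifies the pulled-back twisted charts with those over the original $\cV$'s.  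Under this reduction, Lemma~\ref{Lm:Phi_x_inj} yields a unique $y'\!\inn\!\fU_{x;[\lt_{(I)}]}$ with $\Phi_x(y')\!\eq\!x'$, whose image $\fz'\!\eq\!\th_x(y')$ lies in $\cV_{\tr_{(I)}}$.

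Next I would align the auxiliary data of the two charts.  Since by Lemma~\ref{Lm:Chart_indep_svi} the construction of $\Phi_{x'}$ is independent (up to isomorphism) of the choice of special vertices, I would take them to be the images of $\{\sv_i\!:i\!\inn\!\bbI_+\!\bsl\!I_+\}$ under the edge contraction in~(\ref{Eqn:t(I)}).  Similarly, Lemma~\ref{Lm:Chart_compatible_ptws} recentered at $\fz'$ allows the modular and extended local parameters defining $\Phi_{x'}$ to be taken as those obtained from $\{\ze_e\}$ and $\{\vs_j\}$ by subtracting their values at $\fz'$ where appropriate; these restrict on $\cV_{\tau_{(I)}}$ to the local parameters listed in~(\ref{Eqn:V_(I)_parameters}) and furnish a valid set of modular parameters for $\mwt_{\tr_{(I)}}$ at $\varpi(x')$.

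With these compatible choices, the transition $\Phi_{x'}^{-1}\!\circ\!\Phi_x$ can be written down explicitly via the iterative inversion in the proof of Lemma~\ref{Lm:Phi_x_inj}.  The coordinates of $\fU_{x'}$ split into those inherited from $\cV_{\tr_{(I)}}$ and the twisted-field coordinates, whose values at $y'$ are captured by the $\mu_{e;i;I}(y')$ of~(\ref{Eqn:Phi_x_(I)}).  The inverse formulas express the $\fU_x$-coordinates as rational functions whose denominators are exactly those $\ze_{e'}$ with $\lbrp{\ell(e'),\ell(v_{e'}^+)}\!\subset\!I_+$ and those $u_e$ with $e\!\inn\!\wh\Edg(\lt)\!\bsl\!\bbI_\cht$, both of which are units on $\fU_{x;(I)}^\circ$ containing $y'$, by the description of $\cV_{(I)}^\circ$ and~(\ref{Eqn:al_e_nonzero}).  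Hence the same inversion extends from the single stratum $\fU_{x;[\lt_{(I)}]}$ to a regular map from an open neighborhood of $y'$ in $\fU_x$ into $\fU_{x'}$, and running the inductive inversion in the opposite direction produces an inverse morphism.  The main obstacle is the combinatorial bookkeeping required to match the coordinates of $\fU_x$ and $\fU_{x'}$ according to~(\ref{Eqn:edges_cntrd}) and~(\ref{Eqn:t_(I)_bbI}), and to upgrade the stratawise isomorphisms (immediate from Lemma~\ref{Lm:Phi_x_inj} applied to each stratum $\fU_{x;[\lt_{(K)}]}$ with $K\!\supseteq\!I$) to a single scheme-theoretic isomorphism on a common open neighborhood using the unit/regularity dichotomy above.
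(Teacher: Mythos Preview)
Your proposal is correct and follows essentially the same strategy as the paper: normalize the auxiliary choices via Lemmas~\ref{Lm:Chart_indep_svi} and~\ref{Lm:Chart_compatible_ptws}, then exhibit the transition explicitly. Two presentational differences are worth noting. First, the paper's chart reduction is simpler than your smooth-base-change argument: since $\varpi(x')\in\cV_{(I)}^\circ\subset\cV$, one may simply take $\cV_{\varpi(x')}\subset\cV_{(I)}^\circ$ as an open subset and reuse the modular parameters $\ze_e$ with $e\in\Edg(\lt_{(I)})$ directly (the remaining $\ze_e$ become part of the auxiliary $\vs$-parameters on $\cV_{\varpi(x')}$). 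Second, rather than extending the stratum-wise inversion of Lemma~\ref{Lm:Phi_x_inj} off the stratum and arguing the denominators are units on $\fU_{x;(I)}^\circ$, the paper writes down a single closed-form isomorphism $g$ on all of $\Phi_x^{-1}(\cU)$ in terms of $\ve_i$, $u_e$, $\mu_{e;i;I}$, and $\th_x^*\ze_e$, and then verifies $\th_{x'}\circ g=\th_x$ and $g^*\mu'_{e;i;I'}=\mu_{e;i;I\sqcup I'}$ directly. Your extension argument is morally the same computation, but the explicit $g$ avoids having to check that the stratum-by-stratum formulas glue scheme-theoretically.
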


\begin{proof}
Since $x'\inn\Phi_x\big(\fU_{x;\lt_{(I)}}\big)\!\subset\!\Phi_x(\fU_x)$,
its underlying weighted curve satisfies
$$
 \varpi(x')\in\cV_{(I)}^\circ\quad
 \Big(=\big\{\ze_e\!\ne\!0:e\inn\Edg(\lt)\bsl\Edg(\lt_{(I)}\big)\big\}
 \subset \cV\Big).
$$
Thus, replacing $\cV_{\varpi(x')}$ by $\cV_{\varpi(x')}\!\cap\!\cV_{(I)}^\circ$ if necessary,
we may assume 
$$
 \varpi(x')\in
 \cV_{\varpi(x')}
 \subset\cV_{(I)}^\circ.
$$
Moreover,
the following modular parameters on $\cV$:
$$
 \ze_e\,,\quad e\in\Edg\big(\lt_{(I)}\big)
 \subset\Edg(\lt)
$$
also serve as modular parameters on $\cV_{\varpi(x')}$.
Thus by Lemma~\ref{Lm:Chart_compatible_ptws},
we may assume $\Phi_{x'}$ is constructed using the local parameters on $\cV_{\varpi(x')}$:
$$
\{\ze_e\}_{e\in\Edg(\lt_{(I)})}\sqcup
\Big(
\{\vs_j\}_{j\in J}
\sqcup
\{\ze_e\}_{e\in\Edg(\lt)\bsl\Edg(\lt_{(I)})}
\Big)
$$
as the analogue of~(\ref{Eqn:local_parameters}).

Let the special vertices $\sv_i$ and edges $\se_i$ of $\lt$ be respectively as in~(\ref{Eqn:sv_i}) and~(\ref{Eqn:se_i}).
By Lemma~\ref{Lm:Chart_indep_svi},
we may further assume that the special vertices  and edges of $\lt_{(I)}$ are respectively
$$
\sv_i\quad\tn{and}\quad\se_i,\qquad
i\inn\bbI_+\big(\lt_{(I)}\big)\eq\bbI_+\bsl I_+.
$$
For any $i\inn\bbI_+\bsl I_+$ and $h\inn\Z_{\ge 0}$,
let $i^\uparrow$, $\se_i^\dag$, and $i(h)$ be the analogues of $i^\sharp$, $\se_i^+$, and $i[h]$,
respectively, for the weighted level tree $\lt_{(I)}$ instead of $\lt$;
see~(\ref{Eqn:i^sharp}), (\ref{Eqn:se_i}), and~(\ref{Eqn:i[h]}) for notation.

Recall that
$
\bbI_-\big(\lt_{(I)}\big)\eq 
\bbI_-\bsl I_-\!\sqcup\!
\big\{e\inn\bbI_\cht\bsl I_\cht\!:
\ell(v_e^+)\!\le\!\cht\big(\lt_{(I)}\big)\big\}.
$
We denote by
$$
\big(
(\ve_i')_{i\in\bbI_+\!\bsl I_+},\,
(u_e')_{e\in\wh\Edg(\lt_{(I)})\bsl\{\se_i:{i\in\bbI_+\!\bsl I_+}\}},\,
(z_e')_{e\in\bbI_-(\lt_{(I)})},\,
(w_j')_{j\in J\sqcup(\Edg(\lt)\bsl\Edg(\lt_{(I)}))}
\big)
$$
the coordinates on $\fU_{x'}$ 
parallel to~(\ref{Eqn:ti_cV}),
and construct
$$
\th_{x'}\!:\fU_{x'}\!\lra\!\cV_{\varpi(x')}
\quad\tn{and}\quad
\mu'_{e;i;I'}\inn\Ga\big(\sO_{\fU_{x';(I')}^{\circ}}\big),\ 
I'\!\subset\!\bbI\bsl I,\,i\inn\bbI_+\!\bsl(I_+\!\sqcup\!I'),\,e\inn\fE_i
$$
parallel to
$$
 \th_x:\fU_x\lra\cV\qquad
 \tn{and}\qquad
 \mu_{e;i;I}\in\Ga\big(\sO_{\fU_{x;(I)}^\circ}\big),
 \quad i\inn\bbI_+\!\bsl I_+,~e\inn\fE_i,
$$
of~(\ref{Eqn:theta_x}) and~(\ref{Eqn:Phi_x_(I)}), respectively.
In this way,
$
 \Phi_{x'}\!:\fU_{x'}\!\lra\!\mtd{}
$ 
is constructed analogously to $\Phi_x$.

Let $g\!:\Phi_x^{-1}(\cU)\!\lra\!\Phi_{x'}^{-1}(\cU)$ be the isomorphism given by
\begin{equation*}\begin{split}
g^*\ve_i'
=\ve_i 
&\cdot
\Big(\!\!
\prod_{h\in\lprp{i,i^\uparrow}_\lt}\!\!\!\!\!
\ve_h\Big)
\cdot
\frac{
\prod_{\fe\succ_\lt\se_{i^\uparrow},\,
\lbrp{\ell(\fe),\ell(v_\fe^+)}_\lt\subset I}\th_x^*\ze_\fe
}{
\prod_{\fe\succ_\lt\se_i,\,
\lbrp{\ell(\fe),\ell(v_\fe^+)}_\lt\subset I}\th_x^*\ze_\fe}
\cdot
\frac{
u_{\se_i^+}\!\cdot\!u_{\se_{i[1]}^+}\!\cdots
}{
u_{\se_{i^\uparrow}^+}\!\cdot\!u_{\se_{i^\uparrow[1]}^+}\!\cdots
}
\cdot
\\
&\cdot
\frac{ 
\mu_{\se_{i^\uparrow}^\dag;\,i^\uparrow(1);\,I}
\cdot
\mu_{\se_{i^\uparrow\!(1)}^\dag;\,i^\uparrow(2);\,I}
\cdots
}{
\mu_{\se_{i}^\dag;\,i(1);\,I}
\cdot
\mu_{\se_{i(1)}^\dag;\,i(2);\,I}
\cdots}
\hspace{1in}
\forall\,i\inn\bbI_+\bsl I_+
\end{split}
\end{equation*}
and
\begin{equation*}\begin{split}
 &g^*u_e'\eq\mu_{e;\,\ell_{(I)}\!(e);\,I}
 \ \ \forall\,e\inn\wh\Edg\big(\lt_{(I)}\big)\bsl\{\se_i\!:i\inn\bbI_+
 \!\bsl I_+\},
 \\
 &g^*z_e'\eq u_e\ \ \forall\,e\inn\big\{e\inn\bbI_\cht\bsl I_\cht\!:
\ell(v_e^+)\!\le\!\cht\big(\lt_{(I)}\big)\big\},
 \qquad
 g^*z_e'\eq z_e\ \ \forall\,e\inn\bbI_-\bsl I_-,
 \\
 &g^* w_j'\eq w_j\ \ \forall\,j\inn J,\qquad
 g^* w_e'\eq \th_x^*\ze_e\ \ \forall\,e\inn
 \Edg(\lt)\bsl\Edg(\lt_{(I)}).
\end{split}\end{equation*}
To see $g$ is well defined,
notice that $\cV_{\varpi(x')}\!\subset\!\cV_{(I)}^\circ$ implies that
\begin{equation}\label{Eqn:U_pullback}
\Phi_x^{-1}(\cU)\subset\fU_{x;(I)}^\circ.
\end{equation}
Thus, every $\mu_{e;i;I}$ above can be considered as a function on $\Phi_x^{-1}(\cU)$.
By~(\ref{Eqn:U_pullback}) and~(\ref{Eqn:th_{x;(I)}}), 
the function $\th_x^*\ze_\fe$ with $\fe\inn\wh\Edg(\lt)\bsl\bbI_\cht$ is nowhere vanishing on $\Phi_x^{-1}(\cU)$ whenever $\lbrp{\ell(\fe),\ell(v_\fe^+)}_\lt\!\subset\!I$.
Taking~(\ref{Eqn:al_e_nonzero}) and~(\ref{Eqn:mu_vanishing}) into account,
we conclude that $g$ is well defined.

Once again, the explicit expression of $g$ implies it is invertible;
see the parallel argument in the proof of Lemma~\ref{Lm:Chart_indep_svi}.
Moreover,
it is a direct check that $\th_{x'}\!\circ\!g\eq \th_x$ and
$$
g^*\mu'_{e;i;I'}=\mu_{e;i;I\sqcup I'}
\in\Ga\big(
\sO_{\fU^\circ_{x;(I\sqcup I')}\!\cap\Phi_x^{-1}(\cU)}
\big)
\quad
\forall\,I'\!\subset\!\bbI\bsl I,\,i\inn\bbI_+\!\bsl(I_+\!\sqcup\!I'),\,e\inn\fE_i.
$$
Thus,
$\Phi_{x'}^{-1}\!\circ\!\Phi_x\eq g$ and hence is an isomorphism.
\end{proof}

\begin{crl}
\label{Crl:MtdSmooth}
$\mtd{}$ is a smooth Artin stack that is birational to $\mwt$, with $\{\Phi_x\!:\fU_x\!\lra\!\mtd{}\}_{x\in\mtd{}}$ as smooth charts.
Moreover, the structure of the stratification~(\ref{Eqn:MtdStrata}) is locally identical to the one
induced by \eqref{Eqn:ti_V_strata}.
Furthermore, for any $x\inn\mtd{}$, any chart $\cV\!\lra\!\mwt$ containing $\varpi(x)\inn\mwt$, and any twisted chart $\Phi_x\!:\fU_x\!\lra\!\mtd{}$ centered at $x$ lying over $\cV\!\lra\!\mwt$,
we have
\begin{center}
\begin{tikzpicture}{h}
\draw (0,1.2) node {$\fU_x$}
      (2,1.2) node {$\mtd{}$}
      (0,0) node {$\cV$}
      (2,0) node {$\mwt$}
      (1,1.2) node[below] {\small{$\Phi_x$}}
      (2.2,.6) node {\small{$\varpi$}}
      (-.3,.55) node {\small{$\th_x$}};
\draw[->,>=stealth'] (.25,1.2)--(1.55,1.2);
\draw[->,>=stealth'] (.25,0)--(1.5,0);
\draw[->,>=stealth'] (0,.9)--(0,.2);
\draw[->,>=stealth'] (1.9,.95)--(1.9,.2);
\end{tikzpicture}
\end{center}
where $\varpi$ is the forgetful morphism as in~(\ref{Eqn:MtdStrata}) and $\th_x$ is as in~(\ref{Eqn:theta_x}).
\end{crl}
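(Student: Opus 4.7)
The plan is to assemble the atlas $\{\Phi_x\colon\fU_x\!\to\!\mtd{}\}_{x\in\mtd{}}$ into a smooth groupoid presentation and read off the remaining assertions directly from the construction.

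\textbf{Step 1 (smooth atlas).} Each source $\fU_x$ is by construction an open subscheme of an affine space (see~(\ref{Eqn:ti_cV})), hence a smooth affine scheme. The set-theoretic map $\Phi_x$ is injective on each stratum $\fU_{x;[\lt_{(I)}]}$ by Lemma~\ref{Lm:Phi_x_inj} and satisfies $\Phi_x(y_x)\eq x$, so the collection $\{\Phi_x\}_{x\in\mtd{}}$ is surjective onto $\mtd{}$. Proposition~\ref{Prp:Chart_compatible} shows that for any two twisted charts $\Phi_x,\Phi_{x'}$ and any point in the image $\cU\eq\Phi_x(\fU_x)\!\cap\!\Phi_{x'}(\fU_{x'})$, the transition $\Phi_{x'}^{-1}\!\circ\!\Phi_x$ is an isomorphism on the corresponding open subschemes of $\fU_x$ and $\fU_{x'}$. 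Consequently, $U:=\bigsqcup_{x\in\mtd{}}\fU_x$ together with the $2$-fibered product $R:=U\!\times_{\mtd{}}\!U$ assembles into a smooth groupoid $R\rightrightarrows U$ with étale (in fact, locally isomorphic) source and target maps. This presents $\mtd{}$ as a smooth algebraic stack with the $\Phi_x$ as smooth charts.

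\textbf{Step 2 (stratification matches locally).} By construction~(\ref{Eqn:Phi_x_(I)}) we have $\Phi_x(\fU_{x;[\lt_{(I)}]})\!\subset\!\mtd{[\lt_{(I)}]}$, and Lemma~\ref{Lm:Phi_x_inj} upgrades this inclusion to an isomorphism onto an open substack of $\mtd{[\lt_{(I)}]}$. Combined with the fact that $\{\mtd{[\lt']}\}_{[\lt']\in\sT_\nL^\wt}$ is a disjoint stratification~(\ref{Eqn:MtdStrata}) and that $\{\fU_{x;[\lt_{(I)}]}\}_{I\subset\bbI}$ is a disjoint stratification~(\ref{Eqn:ti_V_strata}) of $\fU_x$, this shows that $\Phi_x^{-1}(\mtd{[\lt']})$ coincides with the union of those $\fU_{x;[\lt_{(I)}]}$ whose associated weighted level tree class equals $[\lt']$. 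Thus, locally on each smooth chart $\fU_x$, the stratification of $\mtd{}$ induced by~(\ref{Eqn:MtdStrata}) is identical to the stratification~(\ref{Eqn:ti_V_strata}) of $\fU_x$.

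\textbf{Step 3 (commutative diagram).} For any $y\inn\fU_{x;[\lt_{(I)}]}$, definition~(\ref{Eqn:Phi_x_(I)}) gives $\varpi(\Phi_x(y))\eq\th_x(y)\inn\cV_{\tr_{(I)}}\!\subset\!\cV$, i.e.~$\varpi\!\circ\!\Phi_x$ coincides with the composition $\fU_x\!\xrightarrow{\th_x}\!\cV\!\to\!\mwt$ on every stratum. Since the strata~(\ref{Eqn:ti_V_strata}) cover $\fU_x$, the diagram in the statement commutes.

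The only potentially subtle point is Step~1, namely verifying that the transition scheme $R$ is smooth over $U$ (indeed an open subscheme via an isomorphism) and that the groupoid so defined produces the correct set-theoretic $\mtd{}$. The former is provided by Proposition~\ref{Prp:Chart_compatible}, while the latter is ensured by Lemma~\ref{Lm:Phi_x_inj}, which implies that the equivalence classes induced by the groupoid on $U$ are precisely the fibers of the set-theoretic map $\bigsqcup_x\fU_x\!\to\!\mtd{}$. Once this is in place, the rest of the corollary follows from the preceding two steps.
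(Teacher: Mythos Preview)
Your proof follows essentially the same route as the paper's: Proposition~\ref{Prp:Chart_compatible} for the smooth atlas, Lemma~\ref{Lm:Phi_x_inj} for the stratification compatibility, and~(\ref{Eqn:Phi_x_(I)}) for the commutative square. One caveat in Step~1: you assert that $R\rightrightarrows U$ has \emph{\'etale} (locally isomorphic) source and target, but $\mwt$---and hence $\mtd{}$---is a genuine Artin stack (a smooth weighted genus-one curve without marked points has a positive-dimensional automorphism group), so those maps are only smooth in general. Proposition~\ref{Prp:Chart_compatible} supplies the transition isomorphisms between overlapping twisted charts, but the full relation $R=U\times_{\mtd{}}U$ must also carry the isotropy lifted from $\mwt$; the cleanest reading is that the $\fU_x$ glue \emph{relative to} the smooth atlas $\{\cV\to\mwt\}$, with the stack structure of $\mtd{}$ inherited through the maps $\th_x$. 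With that adjustment your argument matches the paper's.
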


\begin{proof}
The first statement follows from Proposition~\ref{Prp:Chart_compatible}, 
(\ref{Eqn:Phi_onto}), and the fact that $\varpi$
restricts to the identity map on the preimage of
the open subset
$$
 \big\{(C,\bfw)\inn\mwt:
 \bfw(C_o)\!>\!0\big\}
 \subset\mwt.
$$
Lemma~\ref{Lm:Phi_x_inj} then implies for every $[\lt]\inn\sT_\nL^\wt$,
the stack structure of $\mtd{[\lt]}$ is the same as that induced from the inclusion $\mtd{[\lt]}\!\hookrightarrow\!\mtd{}$;
i.e.~the second statement of Corollary~\ref{Crl:MtdSmooth} holds.
The last statement follows from~(\ref{Eqn:Phi_x_(I)}).
\end{proof}

\begin{rmk}\label{Rmk:smooth}
By~(\ref{Eqn:theta_x}) and Corollary~\ref{Crl:MtdSmooth},
one sees that on an arbitrary twisted chart $\fU_x$ of $\mtd{}$, 
\begin{equation*}\begin{split}
\varpi^*\big(\!\!\prod_{e'\succeq \se_\cht}\!\!\ze_{e'}\big)
&=(u_{\se_\cht^+}u_{\se_{\cht[1]}^+}\cdots)
\prod_{i\in\bbI_\cht}\ve_i,\\
\varpi^*\big(\prod_{e'\succeq e}\ze_{e'}\big)
&=(u_e u_{\se_\cht^+}u_{\se_{\cht[1]}^+}\cdots)
\prod_{i\in\bbI_\cht}\ve_i=
u_e\cdot \varpi^*\big(\!\!\prod_{e'\succeq \se_\cht}\!\!\ze_{e'}\big)
\quad\forall\,e\inn\fE_\cht.
\end{split}\end{equation*}
This, along with the local equations of $\ov M_1(\P^n,d)$ in~\cite[\S5.2]{HL10},
 implies that the primary component of $\ti M^\tf_1(\P^n,d)$ is smooth and $\ti M^\tf_1(\P^n,d)$  contains at worst normal crossing singularities. This observation should be useful for the cases
 of higher genera.
\end{rmk}

\subsection{A simple example}
\label{Subsec:Eg}
Let $[\lt]\eq[\ga,\bfw,\ell]\inn\sT_\nL^\wt$ be
given by the leftmost diagram in Figure~\ref{Fig:Example}.
Then,
$$
\Edg(\lt)\eq\{a,b,c,d\},\qquad
\cht\eq -2,\qquad
\bbI\eq\bbI_+\eq\{-1,-2\}.
$$
Each of the four distinct subsets $I$ of $\bbI$ determines a weighted level tree $\lt_{(I)}$; see Figure~\ref{Fig:Example}.
Let $x\inn\mtd{[\lt]}$ be a weighted curve of genus 1 with twisted fields over $(C,\bfw)\inn\mwt$.
The core and the nodes of $C$ are
labeled by $o$ and by $a,b,c,d$, respectively.

Let $\cV\!\lra\!\mwt$ be an affine smooth chart containing $(C,\bfw)$,
with a set of local parameters 
$$
\{\ze_a, \ze_b, \ze_c, \ze_d\}\sqcup
\{\vs_j\}_{j\in J}
$$
centered at $(C,\bfw)$,
where $\ze_a,\ldots,\ze_d$ are the modular parameters.
There then exist non-zero $\la_c$ and $\la_d$ such that
$$
x=\big(\,\ud 0\;;\; 
[\,0\,,\prt_{\ze_b}|_{\ud 0}\,]\;,\; 
[\,\prt_{\ze_a}|_{\ud 0}\,,\,
\la_c\!\cdot\!(\prt_{\ze_b}\!\otimes\!\prt_{\ze_c})|_{\ud 0}\,, \,
\la_d\!\cdot\!(\prt_{\ze_b}\!\otimes\!\prt_{\ze_d})|_{\ud 0}\,]\;
\big).
$$

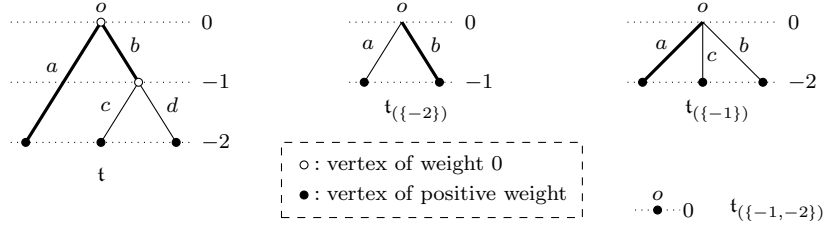
\begin{figure}
\begin{center}
\begin{tikzpicture}{htb}
\draw[dotted]
 (1.8,0)--(4.2,0)
 (1.8,-.8)--(4.2,-.8)
 (1.8,-1.6)--(4.2,-1.6);
\draw
 (3,0)--(2,-1.6)
 (3,0)--(4,-1.6)
 (3.5,-.8)--(3,-1.6);
\draw[very thick]
 (3,0)--(2,-1.6)
 (3,0)--(3.5,-.8);
\draw[fill=white]
 (3,0) circle (1.5pt)
 (3.5,-.8) circle (1.5pt);
\filldraw
 (2,-1.6) circle (1.5pt)
 (4,-1.6) circle (1.5pt)
 (3,-1.6) circle (1.5pt);
\draw
 (4.2,0) node[right] {\scriptsize{$0$}}
 (4.2,-.8) node[right] {\scriptsize{$-1$}}
 (4.2,-1.6) node[right] {\scriptsize{$-2$}}
 (2.35,-0.6) node {\scriptsize{$a$}}
 (3.45,-.3) node {\scriptsize{$b$}}
 (3.06,-1.12) node {\scriptsize{$c$}}
 (3.95,-1.1) node {\scriptsize{$d$}}
 (3,0) node[above] {\scriptsize{$o$}}
 (3,-1.8) node[below] {\scriptsize{$\lt$}};

\draw[dotted, xshift=4cm]
 (2.3,0)--(3.7,0)
 (2.3,-.8)--(3.7,-.8);
\draw[xshift=4cm]
 (3,0)--(2.5,-.8)
 (3,0)--(3.5,-.8);
\draw[very thick,xshift=4cm]
 (3,0)--(3.5,-.8);
\draw[fill=black,xshift=4cm]
 (2.5,-.8) circle (1.5pt)
 (3.5,-.8) circle (1.5pt);
\draw[xshift=4cm]
 (3.7,0) node[right] {\scriptsize{$0$}}
 (3.7,-.8) node[right] {\scriptsize{$-1$}}
 (2.55,-0.3) node {\scriptsize{$a$}}
 (3.45,-.3) node {\scriptsize{$b$}}
 (3,0) node[above] {\scriptsize{$o$}}
 (3.2,-.9) node[below] {\scriptsize{$\lt_{(\{-2\})}$}};
 
\draw[dotted, xshift=8cm]
 (2,0)--(4,0)
 (2,-.8)--(4,-.8);
\draw[xshift=8cm]
 (3,0)--(2.2,-.8)
 (3,0)--(3,-.8)
 (3,0)--(3.8,-.8);
\draw[very thick,xshift=8cm]
 (3,0)--(2.2,-.8);
\draw[fill=black,xshift=8cm]
 (2.2,-.8) circle (1.5pt)
 (3,-.8) circle (1.5pt)
 (3.8,-.8) circle (1.5pt);
\draw[xshift=8cm]
 (4,0) node[right] {\scriptsize{$0$}}
 (4,-.8) node[right] {\scriptsize{$-2$}}
 (2.45,-0.3) node {\scriptsize{$a$}}
 (3.55,-.3) node {\scriptsize{$b$}}
 (3.12,-.45) node {\scriptsize{$c$}}
 (3,0) node[above] {\scriptsize{$o$}}
 (3.2,-.9) node[below] {\scriptsize{$\lt_{(\{-1\})}$}};
 
\draw[dotted, xshift=7.4cm, yshift=-2.5cm]
 (2.7,0)--(3.3,0);
\draw[fill=black,xshift=7.4cm,yshift=-2.5cm]
 (3,0) circle (1.5pt);
\draw[xshift=7.4cm,yshift=-2.5cm]
 (3.2,0) node[right] {\scriptsize{$0$}}
 (3,0) node[above] {\scriptsize{$o$}}
 (4.6,0) node {\scriptsize{$\lt_{(\{-1,-2\})}$}};

\draw[xshift=1.7cm,yshift=.1cm]
 (4,-2) circle (1.5pt);
\filldraw[xshift=1.7cm,yshift=.1cm]
 (4,-2.4) circle (1.5pt);
\draw[xshift=1.7cm,yshift=.1cm]
 (4,-2) node[right] {\scriptsize{$:\tn{vertex~of~weight}~0$}}
 (4,-2.4) node[right] {\scriptsize{$:\tn{vertex~of~positive~weight}$}};
\draw[dashed,xshift=1.7cm,yshift=.1cm]
 (3.7,-1.7) rectangle (7.7,-2.7);
\end{tikzpicture}
\end{center}
\caption{Relevant weighted level trees in \S\ref{Subsec:Eg}}\label{Fig:Example}
\end{figure}

We choose the special edges (\ref{Eqn:se_i}) of $\lt$ to be $\se_1\eq b$ and $\se_2\eq a$.
Let
$$
 \fU_x\subset\A^{\{-1,-2\}}\times\A^{\{c,d\}}\times\A^J
$$
be an open subset containing the point 
$$
 y_x= (0,0,\la_c,\la_d,0,\ldots,0).
$$
The coordinates of $\fU_x$ are denoted by 
$$
 \ve_{-1},\,\ve_{-2},\,
 u_c,\, u_d,\quad\tn{and}\quad
 w_j~\tn{with}~j\inn J.
$$
We may take $\fU_x\eq\{u_c\!\ne\!0,u_d\!\ne\!0\}$.

By Corollary~\ref{Crl:MtdSmooth} and~(\ref{Eqn:theta_x}),
the forgetful morphism $\varpi\!:\mtd{}\!\lra\!\mwt$ can locally be written as $\th_x\!:\fU_x\!\lra\!\cV$ such that
$$
 \th_x
 \big(\,
 \ve_{-1},\,\ve_{-2},\,
 u_c,\, u_d,\, (w_j)\,
 \big)
 \eq 
 \big(
 \underbrace{
 \ve_{-1}\!\cdot\!\ve_{-2}}_{\ze_a},
 \underbrace{\ve_{-1}}_{\ze_b},
 \underbrace{
 \ve_{-2}\!\cdot\!u_c}_{\ze_c},
 \underbrace{
 \ve_{-2}\!\cdot\!u_d}_{\ze_d},
 (w_j)\,
 \big).
$$
Considering all possible subsets $I$ of $\bbI$ in (\ref{Eqn:Phi_x_(I)}),
we obtain a twisted chart $\Phi_x\!:\fU_x\!\lra\!\mtd{}$ centered at $x$ over  $\cV\!\lra\!\mwt$
so that for any 
$$
 y=\big(\,
 \ve_{-1},\,\ve_{-2},\,
 u_c,\, u_d,\, (w_j)\,
 \big)\in\fU_x,
$$
\begin{enumerate}
[leftmargin=*,label=$\bullet$]
\item 
if $\ve_{-1}\eq \ve_{-2}\eq 0$,
then 
\begin{equation*}\begin{split}
 \Phi_x(y)\eq
 \Big(
  \big(&0,0,0,0,(w_j)\big);\\
  [&0,\prt_{\ze_b}\!|_{\th_x(y)}],\;
  [\prt_{\ze_a}\!|_{\th_x(y)},\,u_c
  (\prt_{\ze_b}\!\otimes\!\prt_{\ze_c})\!|_{\th_x(y)},\,
  u_d
  (\prt_{\ze_b}\!\otimes\!\prt_{\ze_d})\!|_{\th_x(y)}]
  \Big)
  \in\mtd{[\lt]};
\end{split}\end{equation*}

\item 
if $\ve_{-1}\!\ne\! 0$ and $\ve_{-2}\eq 0$,
then
$$
\Phi_x(y)\eq 
\Big(\!
   \big(0,\ve_{-1},0,0,\,(w_j)\big);\,
   [\prt_{\ze_a}\!|_{\th_x(y)}\,,\,
 \tfrac{u_c}{\ve_{-1}}\!\cdot\!
   \prt_{\ze_c}\!|_{\th_x(y)}\,,\,
 \tfrac{u_d}{\ve_{-1}}\!\cdot\!
 \prt_{\ze_d}\!|_{\th_x(y)}]
   \Big)
  \in\mtd{[\lt_{(\{-1\})}]};
$$

\item 
if $\ve_{-1}\eq 0$ and $\ve_{-2}\!\ne\! 0$,
then
$$
\Phi_x(y)\eq 
\Big(
   \big(0,\,0,\,\ve_{-2}\!\cdot\!u_c,\,\ve_{-2}\!\cdot\!u_d,\,(w_j)\big);\,
   [\ve_{-2}\!\cdot\!\prt_{\ze_a}\!|_{\th_x(y)}\,,\,
   \prt_{\ze_b}\!|_{\th_x(y)}]
   \Big)
  \in\mtd{[\lt_{(\{-2\})}]};
$$
  
\item 
if $\ve_{-1}\!\ne\! 0$ and $\ve_{-2}\!\ne\! 0$,
then
$$
\Phi_x(y)=\th_x(y)=
   \big(\ve_{-1}\!\cdot\!\ve_{-2},\,
   \ve_{-1},\,\ve_{-2}\!\cdot\!u_c,\,\ve_{-2}\!\cdot\!u_d,(w_j)\big)
  \in\mtd{[\lt_{(\{-1,-2\})}]}.
$$
\end{enumerate}

With the expressions of $\Phi_x$ as above,
it is straightforward to check 
$$
 \Phi_x(0,0,\la_c,\la_d,0,\ldots,0)=x,
$$
as well as to verify the statements of Lemmas~\ref{Lm:Phi_x_inj}, \ref{Lm:Chart_indep_svi}, \ref{Lm:Chart_compatible_ptws},
and Proposition~\ref{Prp:Chart_compatible} in this situation.

\subsection{Universal family}\label{Subsec:Universal_family}
Let $\pi^\wt\!:\cC^\wt\!\lra\! \mwt$ be the universal weighted  nodal curves of genus 1.
The stratification~(\ref{Eqn:Mwt_strata}) gives rise to a stratification
$$
 \cC^\wt=\!\bigsqcup_{\tr\in\sT_\nR^\wt}\!\!\cC^\wt_\tr\qquad
 \tn{satisfying}\qquad
 \pi^\wt(\cC^\wt_\tr)\eq\mwt_\tr\quad
 \forall\,\tr\inn\sT_\nR^\wt.
$$
Parallel to~(\ref{Eqn:MtdStrat_and_sEcT}) and~(\ref{Eqn:MtdStrata}),
we set
\begin{equation*}
\begin{split}
 \cC^\tf_{[\lt]}&=
 \bigg(
 \prod_{i\in\bbI_+(\lt)}\!\!
 \Big\lgroup\!
  \Big(\,\mathring\P\big(\!
   \bigoplus_{\!
   \begin{subarray}{c}
    e\in\Edg(\lt),\,
    \ell(v_e^-)=i
   \end{subarray}
   }\!\!\!\!\!\!\!\!\!\!\!\!
   (\pi^\wt)^*L_e^\succeq\;
  \big)\!\Big)\Big/
  \cC^\wt_{\ff[\lt]}\Big\rgroup\!\!
 \bigg)
 \stackrel{}{\lra}\cC^\wt_{\ff[\lt]}\qquad
 \forall\,[\lt]\inn\sT_\nL^\wt, 
 \\
 \cC^\tf&=
 \!\bigsqcup_{[\lt]\in\sT_\nL^\wt}\!\!\!
 \cC^\tf_{[\lt]}
 \stackrel{}{\lra}\cC^\wt.
\end{split}
\end{equation*} 
Mimicking the construction of the stack structure of $\mtd{}$ in \S\ref{Subsec:Charts_of_Mtf} and \S\ref{Subsec:transition_maps},
we can endow $\cC^\tf$ with a stack structure analogously.
Furthermore,
the projection $\pi^\wt\!:\cC^\wt\!\lra\! \mwt$ induces a unique projection
\begin{equation}\label{Eqn:tf_universal_family}
\pi^\tf:\cC^\tf\lra\mtd{}.
\end{equation}
It is straightforward that
$$
\cC^\td\cong\cC^\wt\!\times_{\mwt}\!\mtd{}\lra\mtd{}.
$$

For any scheme $S$, a flat family  $\cZ/S$ of stable weighted nodal curves of genus 1 with twisted fields
corresponds to a morphism 
\hbox{$f\!: S\lra\mtd{}$} such that $\cZ/S$ is the pullback of~(\ref{Eqn:tf_universal_family}):
\begin{equation}\label{Eqn:family}
\cZ/S \cong (S \times_{\mtd{}} \cC^\td)/S\,.
\end{equation}
This leads to the following statement.

\begin{prp}
 \label{Prp:Stack}
  $\cC^\tf\!\lra\!\mtd{}$ in~(\ref{Eqn:tf_universal_family}) gives the universal family of $\mtd{}$. 
\end{prp}

\begin{rmk}\label{Rmk:moduli}
 One may establish a moduli interpretation of $\mtd{}$ as follows:
for any scheme $S$,
every flat family $\cZ/S$ of stable weighted nodal curves of genus 1 with twisted fields can be constructed directly as follows.
A priori,
$\cZ/S$ should be over a flat family $\cC_S/S$ of stable weighted curves,
thus by the universality of the moduli $\mwt$,
there exists a morphism
$$
 \al:S\lra\mwt
 =\!\bigsqcup_{\tr\in\sT_\nR^\wt}\!\!\mwt_\tr
$$
such that $\cC_S/S$ is the pullback of $\cC^\wt\!/\,\mwt$ via $\al$.
This induces a stratification of the scheme~$S$:
\begin{equation}\label{Eqn:S_stratification}
 S=\!\bigsqcup_{\tr\in\sT_\nR^\wt}\!\!S_\tr\qquad
 \tn{satisfying}\qquad
 \al(S_\tr)\!\subset\!\mwt_\tr\quad
 \forall\,\tr\inn\sT_\nR^\wt.
\end{equation} 
We take
\begin{equation*}
\begin{split}
 S^\tf_{[\lt]}&=
 \bigg(
 \prod_{i\in\bbI_+(\lt)}\!\!\!
  \Big\lgroup\!
  \Big(\,\mathring\P
  \big(\!
   \bigoplus_{\!
   \begin{subarray}{c}
    e\in\Edg(\lt),\,
    \ell(v_e^-)=i
   \end{subarray}
   }\!\!\!\!\!\!\!\!\!\!\!
   \al^*L_e^\succeq\;
   \big)
   \Big)\Big/S_{\ff[\lt]}
  \Big\rgroup\!\!
 \bigg)
 \stackrel{\pi_S}{\lra}S_{\ff[\lt]}\qquad
 \forall\,[\lt]\inn\sT_\nL^\wt,\\
 S^\tf&=
 \!\bigsqcup_{[\lt]\in\sT_\nL^\wt}\!\!
 S^\tf_{[\lt]}
 \stackrel{\pi_S}{\lra}S.
\end{split}
\end{equation*} 

For any chart $\cV_S\!\lra\!S$,
shrinking it if necessary,
we see there exists a (smooth) chart $\cV\!\lra\!\mwt$ such that 
\begin{center}
\begin{tikzpicture}{h}
\draw (0,1) node {$\cV_S$}
      (1.9,1) node {$\cV$}
      (0,0) node {$S$}
      (2,0) node {$\mwt$};
\draw[->,>=stealth'] (.25,1)--(1.65,1);
\draw[->,>=stealth'] (.25,0)--(1.5,0);
\draw[->,>=stealth'] (0,.7)--(0,.2);
\draw[->,>=stealth'] (1.9,.75)--(1.9,.2);
\end{tikzpicture}
\end{center}
commutes.
The modular parameters $\ze_e$ on $\cV$ pull back to regular functions on $\cV_S$,
which are denoted by $\ze^S_e$.
By~(\ref{Eqn:S_stratification}),
we have
$$
 S_\tr\cap\cV_S=\{\ze^S_e\eq 0:e\inn\Edg(\tr)\}\cap\{\ze^S_{e'}\!\ne\!0:e'\!\not\in\!\Edg(\tr)\}.
$$
Mimicking the construction in \S\ref{Subsec:Charts_of_Mtf} and \S\ref{Subsec:transition_maps},
we can thus endow $S^\tf$ with a scheme structure.

We say $\cZ/S$ is a flat family of stable weighted nodal curves of genus 1 with twisted fields
if and only if there exists a section $\si^\tf$ of $\pi_S\!:S^\tf\!\lra S$ such that
$$
 \cZ=\cC_S\times_S\big(\si^\tf(S)\big).
$$
This construction is consistent with~(\ref{Eqn:family}). 
One can check that the groupoid sending any scheme $S$ to the set of all such defined flat families $\cZ/S$ is represented by $\mtd{}$.

We would like to remark that a more succinct definition of 
a flat family of stable weighted nodal curves of genus 1 with twisted fields
should be desirable.
\end{rmk}

\section{Comparison with Hu-Li's blowup stack $\ti\fM^{\tn{wt}}$}

Let $\pi\!:\tmwt\!\lra\!\mwt$ be the sequential blowup constructed in~\cite[\S2.2]{HL10}.
Since $\mwt$ is a smooth Artin stack and the blowup centers are all smooth, 
so is $\tmwt$.
As per the convention of this paper, we omit the subscript indicating the genus.
In Proposition~\ref{Prp:Isomorphism},
we show that $\mtd{}$ is isomorphic to $\tmwt$.
Lemma~\ref{Lm:Blowup} is rather technical; it is only used in the proof of Proposition~\ref{Prp:Isomorphism}.

We briefly recall the notion of the \ts{locally tree compatible blowups} described in~\cite[\S3]{HLN}.
Let $\fM$ be a smooth stack, $\ga$ be a rooted tree, and $\cV$ be an affine smooth chart of $\fM$.
If there exists a set of local parameters on $\cV$ so that a subset of which can be written as
\begin{equation*}
 \big\{\,
 z_e\inn\Ga\big(\sO_{{\cV}}\big):
 e\in\Edg(\ga)\,
 \big\}\,,
\end{equation*}
then the set is called a \ts{$\ga$-labeled subset of local parameters} on $\cV$.
For example,
if $\fM\eq\mwt$ and $\cV$ is a chart centered at a weighted curve whose reduced dual tree is $\ga$,
then the set of the modular parameters $\{\ze_e\}_{e\in\Edg(\ga)}$ is a $\ga$-labeled subset of local parameters. 

Let $\Ver(\ga)_{\min}$ be the set of the minimal vertices of $\ga$ with respect to the tree order.
We call a subset $\fS$ of $\Edg(\ga)$ a \ts{traverse section} if for any $v\inn\Ver(\ga)_{\min}$,
the path between $o$ and $v$ contains exactly one  element of~$\fS$.
For example,
the subsets $\fE_i$ of $\Edg(\ga)$ as in~(\ref{Eqn:fE_i}) are traverse sections.
Let $\Xi(\ga)$ be the set of the traverse sections.
The tree order on $\Edg(\ga)$ induces a partial order on $\Xi(\ga)$ such that
$$
 \fS\succ\fS'\quad
 \Longleftrightarrow\quad
 \big\lgroup\,
 \fS\!\ne\!\fS'\,
 \big\rgroup~\tn{and}~
 \big\lgroup\,
 \forall~e\inn\fS,~
 \exists~e'\inn\fS'~\tn{s.t.}~e\!\succeq\!e'\,
 \big\rgroup.
$$
We remark that the tree order on $\Edg(\ga)$ and the induced order on $\Xi(\ga)$ in this paper are both {\it opposite} to those in~\cite{HLN},
in order to be consistent with the order of the levels of the weighted level trees.

Let $\ti\fM\!\lra\!\fM$ be the sequential blowup of $\fM$ successively along the proper transforms of the closed substacks
$
 Z_1, Z_2,\ldots
$ of $\fM$.

\begin{dfn}{\cite[Definitions~3.2.4~\&~3.2.1]{HLN}}\label{Dfn:Local_Tree_Comptbl}
The blowup $\ti\fM\!\lra\!\fM$ above is said to be \ts{locally tree-compatible} if
there exists an \'etale cover $\{\cV\}$ of $\fM$
such that for each $\cV\inn\{\cV\}$,
there exist a rooted tree $\ga$, a partition of $\Xi(\ga)$:
 $$
  \Xi(\ga)
  =\bigsqcup_{k\ge 1}
  \Xi_k(\ga)
 $$
and a $\ga$-labeled subset of local parameters on $\cV$ such that
\begin{itemize}[leftmargin=*]
\item
 for every $k\ge 1$,
 $$
  Z_{k}\cap\cV= 
  \bigcup_{\fS\in\Xi_k(\ga)} \!\!\!\!
  \big\{\,z_e\eq 0:\,
  e\inn \fS\,\big\};
 $$

\item
if $\fS'\inn\Xi_{k'}(\ga)$, $\fS''\inn\Xi_{k''}(\ga)$, and $\fS'\!\succ\!\fS''$,
then $k'\!<\!k''$.
\end{itemize}
\end{dfn}

If a sequential blowup $\ti\fM\!\lra\!\fM$ is locally tree-compatible,
then the blowup procedure is finite on each $\cV\inn\{\cV\}$, because the set $\Xi(\ga)$ is finite.

\begin{lmm}\label{Lm:Blowup}
If the blowup $\ti\fM\!\lra\!\fM$ successively along the proper transforms of the closed substacks $Z_1,Z_2,\ldots$ of $\fM$ is locally tree-compatible,
then the blowup $\ti\fM'\!\lra\!\fM$ successively along the total transforms of 
$$
 Y_1=Z_1,\ \ Y_2=Z_1\!\cup\!Z_2,\ \ Y_3=Z_1\!\cup\!Z_2\!\cup\!Z_3,\ \ \ldots
$$
yields the same space, i.e.~$\ti\fM'\eq\ti\fM$.
\end{lmm}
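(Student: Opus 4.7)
The plan is to verify the claim locally on the \'etale cover $\{\cV\}$ furnished by the locally tree-compatible hypothesis and then glue. Fix such a chart $\cV\!\to\!\fM$ with associated rooted tree $\ga$, partition $\Xi(\ga)\eq\bigsqcup_{k\ge 1}\Xi_k(\ga)$, and $\ga$-labeled subset of local parameters $\{z_e\}_{e\in\Edg(\ga)}$, so that $Z_k\cap\cV\eq\bigcup_{\fS\in\Xi_k(\ga)}\{z_e\eq 0:e\inn\fS\}$. Denote by $\ti\fM_k\!\to\!\fM$ and $\ti\fM'_k\!\to\!\fM$ respectively the intermediate stacks obtained from the proper-transform and the total-transform procedures after $k$ steps, with their restrictions to $\cV$ written as $\ti\cV_k$ and $\ti\cV'_k$. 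I would prove by induction on $k$ the joint statement: (a) $\ti\cV_k\eq\ti\cV'_k$ canonically over $\cV$; (b) on this common blowup, the pullback of each ideal sheaf $I_{Z_j}$ with $j\!\le\!k$ is invertible.

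For the inductive step, suppose (a) and (b) hold up to $k\!-\!1$, so that the pulled-back ideals of $Z_1,\ldots,Z_{k-1}$ on $\ti\cV_{k-1}\eq\ti\cV'_{k-1}$ are invertible. I would then show that the pullback of the ideal sheaf of $Y_k\eq Z_1\!\cup\!\cdots\!\cup\!Z_k$ to $\ti\cV_{k-1}$ factors as a product of these invertible ideals times the ideal of the proper transform of $Z_k$. This reduces to a coordinate calculation on each affine chart of $\ti\cV_{k-1}$: each pulled-back $I_{Z_j}$ for $j\!<\!k$ becomes principal, generated by a single monomial $m_j$ in the new parameters with the $m_j$ pairwise coprime, and the pullback of $I_{Y_k}$ equals $(m_1)\!\cdots\!(m_{k-1})\!\cdot\!\ti I_{Z_k}$, where $\ti I_{Z_k}$ denotes the ideal of the proper transform of $Z_k$. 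Since blowing up an invertible ideal is an isomorphism, the blowup of $\ti\cV'_{k-1}$ along the pulled-back $I_{Y_k}$ coincides with the blowup along $\ti I_{Z_k}$, which is $\ti\cV_k$ by definition. This settles (a) at step $k$; (b) at step $k$ then follows because on the blowup along $\ti I_{Z_k}$ the pullback of $\ti I_{Z_k}$ is invertible by the universal property of blowups, while the invertibility of the pullbacks of the earlier $I_{Z_j}$ is preserved under further blowups.

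The technical crux --- where the tree-compatibility ordering really enters --- is the coordinate description underlying (b). The ordering axiom of Definition~\ref{Dfn:Local_Tree_Comptbl} forces the traverse sections to be blown up from $\succ$-largest to $\succ$-smallest, so on each affine chart of the resulting blowup the parameters $\{z_e:e\inn\fS\}$ for a section $\fS$ currently being blown up become, up to units, a monomial times new coordinates, making the associated ideal principal. One then checks inductively that subsequent blowups along smaller traverse sections preserve the principal nature of the earlier ideals, since such blowups modify the relevant $z_e$ only by multiplication by monomials in newly introduced exceptional parameters. The main obstacle will be this coordinate bookkeeping --- tracking how the $\ga$-labeled parameters transform through the sequence and verifying the factorization $I_{Y_k}\eq(m_1)\!\cdots\!(m_{k-1})\!\cdot\!\ti I_{Z_k}$ on each affine chart --- a toric computation dictated by the rooted-tree structure.
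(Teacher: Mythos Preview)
Your proposal is correct and follows essentially the same approach as the paper's proof: induct on the blowup step, and at step $k$ show on local charts that the pullback of $I_{Y_{k+1}}$ equals the ideal of the proper transform $\wc Z_{k+1}$ times a principal ideal generated by the product of the exceptional parameters, so that the two blowups coincide. The paper dispatches the ``coordinate bookkeeping'' you flag as the main obstacle by invoking \cite[Lemma~3.3.2 and (3.13)]{HLN}, which supplies the explicit local parameters $\ti\ve_1,\ldots,\ti\ve_k,\ti z_e,\wc z_e$ on the $k$-th blowup and the identity $\pi^{-1}(Z_i)\eq\{\ti\ve_i\eq 0\}$ that you would otherwise have to establish directly.
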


\begin{proof}
We prove the statement by induction.
For each $h\!\ge\!1$,
we will show that after the $h$-th step,
the blowup stacks $\ti\fM_{(h)}'$ of $\fM$ along the total transforms of $Y_1,\ldots,Y_h$ is the same as the blowup $\ti\fM_{(h)}$ of $\fM$ along the proper transforms of $Z_1,\ldots,Z_h$.

The base case of the induction is trivial.
Suppose the blowup $\ti\fM_{(k)}'\eq\ti\fM_{(k)}$.
We will show that for any $x\inn\fM$ and any lift $\ti x$ of $x$ after the $k$-th step,
the blowup along  the total transform $\ti Y_{k+1}$ of $Y_{k+1}$ has the same effect as that along the proper transform $\wc Z_{k+1}$ of $Z_{k+1}$ near $\ti x$.
Since $x$ and $\ti x$ are arbitrary,
this will establish the $(k\!+\!1)$-th step of the induction.

W.l.o.g.~we may assume $x\inn\bigcap_{i=1}^{k+1}Z_k$
(otherwise we simply omit the loci $Z_i$ not containing $x$ and change the indices of $Z_i$ and $Y_i$ accordingly).
The blowup $\ti\fM\!\lra\!\fM$ is locally tree-compatible,
hence there exist a rooted tree $\ga$, an affine smooth chart $\cV$ containing $x$, and a $\ga$-labeled subset of local parameters $z_e$, $e\inn\Edg(\ga)$ on $\cV$ such that
$$
 x\in\{z_e\eq 0 :e\inn\Edg(\ga)\}.
$$
As shown in~\cite[Lemma~3.3.2]{HLN},
there exist traverse sections $\fS_{(k)}\inn\Xi_k(\ga)$ and $\fS_{(k+1)}\inn\Xi_{k+1}(\ga)$ (c.f.~Definition~\ref{Dfn:Local_Tree_Comptbl}), an affine smooth chart $\ti\cV_{\ti x}$, and a subset of local parameters
$$
 \ti\ve_1,\ldots,\ti\ve_k;\qquad
 \ti z_e\ \ \tn{with}~e\inn\fS_{(k+1)}\bsl\fS_{(k)};\qquad
 \wc z_e\ \ \tn{with}~e\inn\fS_{(k+1)}\!\cap\!\fS_{(k)}
$$
on $\ti\cV_{\ti x}$ so that $\wc Z_{k+1}$ is locally given by
\begin{equation*}\begin{split}
 \wc Z_{k+1}\cap\ti\cV_{\ti x}&= \big\{\ti z_e\eq 0:e\inn\fS_{(k+1)}\bsl\fS_{(k)}, \; \wc z_e\eq 0:e\inn\fS_{(k+1)}\!\cap\!\fS_{(k)}\big\}.
\end{split}\end{equation*}
Moreover, by~\cite[(3.13)]{HLN},
the total transform of each $Z_i$ with $1\!\le\!i\!\le\!k$ is locally given by $\{\ti\ve_i\eq 0\}$.
Thus, $\ti Y_{k+1}$ is locally given by
$$
 \ti Y_{k+1}\cap\ti\cV_{\ti x}
 =\big(\wc Z_{k+1}\cap\ti\cV_{\ti x}\big)
 \cup
 \big\{\prod_{1\le i\le k}\!\!\ti\ve_i= 0\big\}.
$$
That is, on the chart $\ti\cV_{\ti x}$, $\wc Z_{k+1}$ and $\ti Y_{k+1}$ are defined by the ideals 
$$
 \mathscr I_{\wc Z_{k+1}}\!\eq\langle \ti z_e:e\inn\fS_{(k+1)}\bsl\fS_{(k)}, \; \wc z_e :e\inn\fS_{(k+1)}\!\cap\!\fS_{(k)} \rangle\quad\tn{and}\quad
 \mathscr I_{\wc Z_{k+1}}\!\big(\!\prod_{1\le i\le k}\!\!\!\ti\ve_i\big),
$$
respectively.
Therefore,
blowing up along $\wc Z_{k+1}$ has the same effect on $\ti\cV_{\ti x}$ as that along $\ti Y_{k+1}$.
\end{proof}

\begin{prp}
\label{Prp:Isomorphism}
$\mtd{}/\mwt$ is  isomorphic to $\tmwt/\mwt$.
 In particular, $\varpi\!:\mtd{}\!\lra\!\mwt$ is proper.
\end{prp}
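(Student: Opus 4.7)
The plan is to compare the two smooth Artin stacks locally over a common \'etale cover of $\mwt$, exploiting the closed-form description of $\varpi\!:\mtd{}\!\to\!\mwt$ in the twisted charts~(\ref{Eqn:theta_x}) and the combinatorial handle on $\tmwt$ provided by Definition~\ref{Dfn:Local_Tree_Comptbl} and Lemma~\ref{Lm:Blowup}. First, I would verify that the Hu-Li sequential blowup $\tmwt\!\to\!\mwt$ of~\cite{HL10} is locally tree-compatible with respect to the reduced dual trees $\ga$ of the weighted curves: on an affine smooth chart $\cV\!\to\!\mwt$ containing a point with reduced dual tree $\ga$, the modular parameters $\{\ze_e\}_{e\in\Edg(\ga)}$ form a $\ga$-labeled subset of local parameters, and the Hu-Li blowup centers cut out unions $\bigcup_{\fS\in\Xi_k(\ga)}\{\ze_e\eq 0:e\inn\fS\}$ for a partition $\Xi(\ga)\eq\bigsqcup_k\Xi_k(\ga)$ indexed by the levels available on trees above $\ga$.

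Next, by Lemma~\ref{Lm:Blowup}, the sequential blowup along proper transforms can be replaced by the blowup along total transforms of the nested unions $Y_1\!\subset\!Y_2\!\subset\!\cdots$. Each such step is a toric blowup in the local coordinates, so on every chart of $\tmwt\!\times_\mwt\!\cV$ one obtains an explicit presentation by parameters $\ti\ve_i$ (one per level, coming from the successive exceptional divisors) together with residual coordinates. Comparing this presentation termwise with the formula~(\ref{Eqn:theta_x}) and with the products appearing in Remark~\ref{Rmk:smooth}, I expect to find that the parameters $\ve_i$ of the twisted chart $\fU_x$ play exactly the role of the $\ti\ve_i$ (level by level) while the coordinates $u_e$ encode the ratios between sections in the exceptional $\P$-bundles; the strata~(\ref{Eqn:ti_V_strata}) and the combinatorics of $\lt_{(I)}$ from~\S\ref{Subsec:Level Tree} are designed to match the incidence structure of the exceptional divisors.

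With this local match in hand, I would build mutually inverse morphisms $\mtd{}\rightleftarrows\tmwt$ over $\mwt$ as follows. For $\mtd{}\!\to\!\tmwt$, the formulas in Remark~\ref{Rmk:smooth} show that on each twisted chart $\fU_x$ the pullback via $\varpi$ of the ideal of each Hu-Li blowup center becomes principal (generated by a monomial in the $\ve_i$'s and the $u_e$'s), so the universal property of blowups produces a unique factorization through each step of the tower, and these patch by Proposition~\ref{Prp:Chart_compatible}. For $\tmwt\!\to\!\mtd{}$, on each local chart of $\tmwt$ the exceptional divisors furnish tautological sections of the $\P$-bundles appearing in~(\ref{Eqn:MtdStrat_and_sEcT}); reading these off gives a twisted-field datum, hence a morphism into $\mtd{}$ by the universal family of Proposition~\ref{Prp:Stack}. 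The uniqueness parts of both universal properties ensure the two morphisms are mutually inverse. Properness of $\varpi$ then follows at once from properness of $\tmwt\!\to\!\mwt$.

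The main obstacle I foresee is purely combinatorial: one must align the labeling of blowup steps (partitioned by $\Xi_k(\ga)$) with the level data $(\bbI_+,\bbI_\cht,\bbI_-)$ of the weighted level trees, and must track how the edge-contraction construction $\lt\mapsto\lt_{(I)}$ corresponds to passing to the boundary strata of the blowup. Once this dictionary is written down carefully, the actual comparison of coordinate formulas is a direct, if lengthy, check against~(\ref{Eqn:theta_x}) and~(\ref{Eqn:Phi_x_(I)}); no further geometric input should be required.
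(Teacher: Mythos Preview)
Your proposal is correct and follows essentially the same route as the paper: invoke Lemma~\ref{Lm:Blowup} to trade proper transforms for total transforms $Y_k$, check on twisted charts via~(\ref{Eqn:theta_x}) that each $\varpi^{-1}(Y_k)$ is a Cartier divisor (the paper makes this explicit as $\{\prod_{|\fE_i|\le k}\ve_i\eq 0\}$) to obtain $\mtd{}\!\to\!\tmwt$ by the universal property of blowups, and build the inverse by reading off twisted-field data from the exceptional divisors. The only cosmetic difference is how the two morphisms are shown to be mutually inverse: the paper simply observes that both commute with the maps to $\mwt$ and restrict to the identity over the dense open locus $\{\bfw(C_o)\!>\!0\}$, rather than appealing to uniqueness in a universal property.
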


\begin{proof}
Our goal is to construct two morphisms $\psi_1$ and $\psi_2$ between $\tmwt$ and $\mtd{}$ so that the following diagram 
\begin{center}
\begin{tikzpicture}{h}
\draw (0,1) node {$\tmwt$}
      (3.2,1) node {$\mtd{}$}
      (1.5,0) node {$\mwt$}
      (1.6,1) node[above]{\small{$\psi_2$}}
      (1.6,1) node[below]{\small{$\psi_1$}}
      (.55,.35) node[left]{\small{$\pi$}}
      (2.45,.35) node[right]{\small{$\varpi$}};
\draw[->,>=stealth'] (.51,1.05)--(2.7,1.05);
\draw[->,>=stealth'] (2.69,.95)--(.5,.95);
\draw[->,>=stealth'] (.1,.7)--(1,.1);
\draw[->,>=stealth'] (2.9,.7)--(2,.1);
\end{tikzpicture}
\end{center}
commutes.
Since $\pi$ and $\varpi$
restrict to the identity map on the preimages of
the open subset
$$
 \big\{(C,\bfw)\inn\mwt:
 \bfw(C_o)\!>\!0\big\}
 \subset\mwt,
$$
respectively,
we see that
$\psi_2\!\circ\!\psi_1$ and $\psi_1\!\circ\!\psi_2$ are the identity maps.
This then implies the former statement of Propoistion~\ref{Prp:Isomorphism}.
The latter statement follows from the former as well as the properness of the blowup $\tmwt\!\lra\!\mwt$.

We first construct $\psi_1$.
For each $k\inn\Z_{>0}$,
let $Z_k\subset\!\mwt$ be the closed locus whose {\it general} point is obtained by attaching $k$ smooth positively-weighted rational curves to the smooth 0-weighted elliptic core at pairwise distinct points. 
By Lemma~\ref{Lm:Blowup},
the blowup $\pi\!:\ti\fM^\wt\!\lra\!\mwt$ successively along the proper transforms of $Z_1, Z_2, \ldots$ can be identified with the blowup of $\mwt$ successively along the total tranforms of 
$$
 Y_1\eq Z_1,\quad Y_2\eq Z_1\!\cup\!Z_2,\quad Y_3\eq Z_1\!\cup\!Z_2\!\cup\!Z_3,\quad\ldots
$$

We observe that for each $k\inn\Z_{>0}$,
the pullback $\varpi^{-1}(Y_k)$ to $\mtd{}$ is a Cartier divisor.
In fact, for any
$[\lt]\eq[\ga,\bfw,\ell]\inn\sT_\nL^\wt$ and $x\inn\mtd{[\lt]}$,
let $\fU\!\lra\!\mtd{}$ be a twisted chart
centered at $x$, lying over a chart $\cV\!\lra\!\mwt$. 
In \cite{HL10},
the blowup $\pi$ locally on $\cV$ is proved to be compatible with the weighted tree $(\ov\ga,\ov\bfw)$ obtained by contracting all the edges $e$ of $\ga$ as long as there exists $v\!\succeq\!v_e^+$ satisfying
$\bfw(v)\!>\!0$.
Let $\{\ze_e\!:e\inn\Edg(\ga)\}$ be a set of modular parameters on~$\cV$ as in~(\ref{Eqn:modular_parameters}) and 
$$
\{\ve_i\}_{i\in\bbI_+}\cup
\{u_e\}_{e\in\wh\Edg(\lt)\bsl\{\se_i:i\in\bbI_+\}}\cup
\{z_e\}_{e\in\bbI_-}
$$ 
be the subset of the parameters~(\ref{Eqn:ti_cV}) on $\fU$.
We claim that
\begin{equation}\label{Eqn:Yk_Cartier_Div}
 \varpi^{-1}(Y_k)\cap
 \fU=
 \big\{\prod_{
 \begin{subarray}{c}
 i\in\bbI_+,\,
 |\fE_i|\le k
 \end{subarray}}
 \!\!\!\!\!\!\!
 \ve_i~\,\eq 0\,
 \big\}.
\end{equation}

To show~(\ref{Eqn:Yk_Cartier_Div}),
we first notice that $\varpi^{-1}(Y_k)\!\cap\!
 \fU\eq
 \varpi^{-1}(Y_k\!\cap\!\cV)$ by Corollary~\ref{Crl:MtdSmooth}.
Every irreducible component of $Y_k\!\cap\!\cV$ can be written in the form
$$
 Y_{k,\fS}\!:=\!
 \{\ze_e\eq 0: e\inn\fS\}\quad
 \tn{with}~\fS\inn\Xi(\ov\ga),~|\fS|\!\le\!k,~
 \fS\!\cap\!\big(\wh\Edg(\lt)\bsl\bbI_{\cht}\big)\!\ne\!\emptyset.
$$
For each irreducible component $Y_{k,\fS}$ of $Y_k\!\cap\!\cV$,
the local expression $\th_x$ of $\varpi$  as in~(\ref{Eqn:theta_x}) implies the pullback $\varpi^{-1}(Y_{k;\fS})$ can be written as
\begin{equation}\begin{split}\label{Eqn:Yk_pullback}
 &\big\{\,
 \prod_{h\in\lbrp{\ell(e),\ell(v_e^+)}}\!\!\!\!\!\!\!\!\!
 \ve_h\;\eq 0:\,e\inn\fS\!\cap\!(\wh\Edg(\lt)\bsl\bbI_\cht),\\
 &\hspace{.5in}
 \big(u_e\cdot\!\!\!\!\!\!\!\!\!\prod_{h\in\lbrp{\ell(e),\ell(v_e^+)}}\!\!\!\!\!\!\!\!\!
 \ve_h~\big)\eq 0:\,e\inn\fS\!\cap\!\bbI_\cht,
 \quad
 z_e\eq 0:\,
 e\inn\fS\!\cap\!\bbI_-\,\big\}.
\end{split}\end{equation}
Since $\fS\!\cap\!\big(\wh\Edg(\lt)\bsl\bbI_{\cht}\big)\!\ne\!\emptyset$ and $|\fS|\!\le\!k$,
we can always find $e\inn\fS\!\cap\!\big(\wh\Edg(\lt)\bsl\bbI_{\cht}\big)$ such that 
$|\fE_{h}|\!\le\!|\fS|\!\le\!k$ for all $h\inn\lbrp{\ell(e),\ell(v_e^+)}$.
By~(\ref{Eqn:Yk_Cartier_Div}) and~(\ref{Eqn:Yk_pullback}),
the pullback $\varpi^{-1}(Y_{k,\fS})$ is thus a sub-locus of the right-hand side of~(\ref{Eqn:Yk_Cartier_Div}).
Moreover, it is a direct check that
$$
 \varpi^{-1}(Y_{k,\fE_i})\cap\fU=\{\ve_i\eq 0\}\qquad
\forall\,i\inn\bbI_+~\tn{with}~|\fE_i|\!\le\! k.
$$
Therefore, (\ref{Eqn:Yk_Cartier_Div}) holds.

Since every $\varpi^{-1}(Y_k)$  is a Cartier divisor of $\mtd{}$,
by the universality of the blowup $\pi\!:\ti\fM^\wt\!\lra\!\mwt$,
we obtain a unique morphism
$$
 \psi_1:\mtd{}\lra\ti\fM^\wt
$$
that $\varpi\!:\mtd{}\!\lra\!\mwt$ factors through. 

We next construct $\psi_2$ explicitly.
For any $\ti x\inn\ti\fM^\wt$,
let $(C,\bfw)$ be its image in $\mwt$.
As shown in~\cite[\S3.3]{HLN},
there exists a unique maximal sequence of exceptional divisors 
$$
 \ti E_{i_1},\ldots,\ti E_{i_k}\subset\tmwt,\qquad
 1\le i_1<\cdots<i_k
$$
containing $\ti x$.
Each $\ti E_{i_j}$ is obtained from blowing up along the proper transform of $Z_{i_j}$.
Note that $k$ is possibly 0, which means $(C,\bfw)$ is not in the blowup loci.
The weighted dual tree $\tau\eq(\ga_C,\bfw)$, along with the exceptional divisors $\ti E_{i_1},\ldots,\ti E_{i_k}$,
uniquely determines a weighted level tree $\lt_{\ti x}$ such that 
$$
 \bbI_+=\bbI_+(\lt_{\ti x})=\{-i_k,\ldots,-i_1\}.
$$
In particular, $\cht\eq\cht(\lt_{\ti x})\eq -i_k$.

With the line bundles $L_e$, $e\inn\Edg(\lt_{\ti x})$, as in~(\ref{Eqn:Le}), 
the notation $\lprp{\cdot,\cdot}_{\lt_{\ti x}}$ and $\lbrp{\cdot,\cdot}_{\lt_{\ti x}}$ as in~(\ref{Eqn:lrbr}),
and the notation $i[h]$ as in~(\ref{Eqn:i[h]}),
the line bundles
\begin{align}\label{Eqn:fL_i}
 \fL_i = L_{\se_i}\!\otimes
 \!\!\!\bigotimes_{j\in\lprp{\,i,\,i[1]\,}_{\lt_{\ti x}}}\!\!\!\!\fL_j^\vee
 \lra\mwt_{\tr},\qquad
 i\inn\bbI_+,
\end{align}
can be constructed inductively over $\bbI_+$.
Then, we take
\begin{align}\label{Eqn:fL_e}
 \fL_e = L_{e}\otimes\!\!\!\!
 \bigotimes_{j\in \lprp{\,\ell(e),\,\ell(v_e^+)\,}_{\lt_{\ti x}}}
 \!\!\!\!\!\!\!\!\!\fL_j^\vee\lra\mwt_\tr,\qquad
 e\inn\wh\Edg(\lt_{\ti x}).
\end{align}
In particular, $\fL_{\se_i}\eq\fL_i$.
For each $e\inn\Edg(\lt_{\ti x})$,
(\ref{Eqn:fL_i}) and (\ref{Eqn:fL_e}) imply
\begin{equation*}
\fL_e
 \!\otimes\!
 \bigotimes_{e'\succ e}\!
 (\fL_{e'}
 \!\otimes\!
 \fL_{\ell(e')}^\vee
 )
 =
 L_e^\succeq
 \otimes
 \big(\bigotimes_{\!\!
 \begin{subarray}{c}
 j\in\lprp{\,\ell(e),\,0\,}_{\lt_{\ti x}}
 \end{subarray}}\!\!\!\!\!\!\!\fL_j^\vee\,\big).
\end{equation*}
Hence for each $i\inn\bbI_+$, 
\begin{equation}\label{Eqn:blowup->tf}
 \mathring\P\Big(\bigoplus_{
 \begin{subarray}{c}
 \ell(v_e^-)=i
 \end{subarray}
 }\!\!\!\!
 \big(\fL_e
 \!\otimes\!
 \bigotimes_{e'\succ e}\!
 (\fL_{e'}
 \!\otimes\!
 \fL_{\ell(e')}^\vee
 )
 \big)
 \Big)
 \!=
 \mathring\P\big(\bigoplus_{
 \begin{subarray}{c}
 \ell(v_e^-)=i
 \end{subarray}
 }\!\!\!L_e^\succeq\,\big).
\end{equation} 

For  $h\!\ge\!1$,
let $\ti x_{(h)}$ be the image of $\ti x$ in the exceptional divisor of the $h$-th step.
Given $i\inn\bbI_+$,
The proper transform of $Z_{-i}$ after the first $-i\!-\!1$ steps of the blowup may have several connected components;
see~\cite[Lemma 3.3.2]{HLN}.
The normal bundle of the component containing $\ti x_{(-i-1)}$ is the pullback
$\pi_{(-i-1)}^*\bigoplus_{e\in\fE_{i}}\!\!\fL_e$,
where $\pi_{(h)}\!:\ti\fM^\wt_{(h)}\!\lra\!\mwt$ is the blowup after the $h$-th step.

Notice that the non-zero entries of $\ti x_{(-i)}$ exactly correspond to the edges $e\inn\fE_{i}$ satisfying $\ell(v_e^-)\eq i$.
Therefore,
$$
 \ti x_{(-i)}\,\in\,
 \pi^*_{(-i-1)}\mathring\P\big(\!\bigoplus_{
 \begin{subarray}{c}
 \ell(v_e^-)=i
 \end{subarray}
 }\!\!\!\fL_e\,\big).
$$ 
Then, $\ti x_{(-j)}$ with $j\inn\lbrp{i,0}_{\lt_{\ti x}}$ together  determine a unique 
\begin{equation*}\label{Eqn:blowup_intermediate}
 \eta_i(\ti x)\in 
 \mathring\P\Big(\bigoplus_{
 \begin{subarray}{c}
 \ell(v_e^-)=i
 \end{subarray}
 }\!\!\!\!
 \big(\fL_e
 \!\otimes\!
 \bigotimes_{e'\succ e}\!
 (\fL_{e'}
 \!\otimes\!
 \fL_{\ell(e')}^\vee
 )
 \big)
 \Big)
 =
 \mathring\P\big(\bigoplus_{
 \begin{subarray}{c}
 \ell(v_e^-)=i
 \end{subarray}
 }\!\!\!L_e^\succeq\,\big).
\end{equation*} 
The last equality above follows from~(\ref{Eqn:blowup->tf}).
We then set
\begin{equation*}
 \psi_2(\ti x)=
 \Big((C,\bfw),[\lt_{\ti x}],
 \big(\eta_i(\ti x):i\inn\bbI_+(\lt_{\ti x})\big)
 \Big)\quad
 \in\mtd{[\lt_{\ti x}]}.
\end{equation*}
Obviously, this implies $\varpi\!\circ\!\psi_2\eq\pi$.

It remains to verify such defined $\psi_2$ is a morphism.
Let $\cV\!\lra\!\mwt$ be a smooth chart containing $(C,\bfw)$,
and $\{\ze_e\!:e\inn\Edg(\lt_{\ti x})\}\!\sqcup\!\{\vs_j\!:j\inn J\}$ be a set of local parameters centered at $(C,\bfw)$ as in~(\ref{Eqn:local_parameters}).
As shown in~\cite[\S3.1\&\S3.3]{HLN},
there exists a chart $\ti\cV_{\ti x}\!\lra\!\ti\fM^\wt$ containing $\ti x$ with local parameters
\begin{gather*}
 \ti\ve_i,~i\inn\bbI_+;\qquad
 \rho_e,~e\inn\wh\Edg(\lt_{\ti x})\bsl\big(\bbI_\cht\!\sqcup\!\{\se_i\!:i\inn\bbI_+\}\big);\\
 \wc z_e,~e\inn\bbI_\cht;\qquad
 \ti z_e,~e\inn\bbI_-;\qquad
 s_j,~j\inn J.
\end{gather*}
All $\rho_e$ are nowhere vanishing on $\ti\cV_{\ti x}$.
Moreover,
with
$
 \pi\!:\ti\cV_{\ti x}\!\lra\!\cV
$
denoting the blowup,
we have 
\begin{gather*}
 \pi^*\ze_{\se_i}\!\eq\!\!\!\!\!\prod_{h\in\lbrp{i,i[1]}_{\lt_{\ti x}}}\!\!\!\!\!\!\!
 \ti\ve_h\ \ \forall\,i\inn\bbI_+;
 \quad
 \pi^*\ze_e\eq\rho_e\!\!\!\!\!\!\!\!\prod_{i\in\lbrp{\ell(e),\ell(v_e^+)}_{\lt_{\ti x}}}\!\!\!\!\!\!\!\!\!\!\!\ti\ve_i\ \ \;\forall\,e\inn\wh\Edg(\lt_{\ti x})\bsl\big(\bbI_\cht\!\sqcup\!\{\se_i\}_{i\in\bbI_+}\big);
 \\
 \pi^*\ze_e\eq\wc z_e\!\!\!\!\!\!\!\!\prod_{i\in\lbrp{\ell(e),\ell(v_e^+)}_{\lt_{\ti x}}}\!\!\!\!\!\!\!\!\!\!\!\ti\ve_i\ \ \forall\,e\inn\bbI_\cht;\qquad
 \pi^*\ze_e\eq\ti z_e\ \ \forall\,e\inn\bbI_-;\qquad
 \pi^*\vs_j\eq s_j\ \ \forall\,j\inn J.
\end{gather*}
For $e\inn\{\se_i\}_{i\in\bbI_+}$, we set $\rho_e\eq 1$. Then,
$$
 \rho_e
 \in\Ga\big(\sO_{\ti\cV_{\ti x}}^*\big)\qquad
 \forall\  
 e\inn\wh\Edg(\lt_{\ti x})\bsl\bbI_\cht.
$$

Let $\fU_{\psi_2(\ti x)}\!\lra\!\mtd{}$ be a twisted chart centered at $\psi_2(\ti x)$, lying over $\cV\!\lra\!\mwt$.
The parameters on $\fU_{\psi_2(\ti x)}$ are as in~(\ref{Eqn:local_parameters}).
It is a direct check that the point-wise defined $\psi_2$ can locally be written as
$$
 \psi_2:\ti\cV_{\ti x}\lra \fU_{\psi_2(\ti x)}
$$
such that
\begin{gather*}
 \psi_2^*\ve_i\eq \ti\ve_i\ \ \forall\,i\inn\bbI_+;\qquad
 \psi_2^*u_e\eq
 \frac{\prod_{\fe\succeq e}\rho_{\fe}}
 {\prod_{\fe\succeq \se_{\ell(e)}}\!\rho_{\fe}}\ \ \forall\,e\inn\wh\Edg(\lt)\bsl\big(\bbI_\cht\!\sqcup\!\{\se_i\}_{i\in\bbI_+}\!\big);
  \\ 
 \psi_2^*u_e\eq\wc z_e\!\cdot\!\frac{\prod_{\fe\succ e}\rho_{\fe}}
 {\prod_{\fe\succeq \se_{\ell(e)}}\!\rho_{\fe}}\ \ \forall\,e\inn\bbI_\cht;\quad
 \psi_2^*z_e\eq\ti z_e\ \ \forall\,e\inn\bbI_-;\quad
 \psi_2^*w_j\eq s_j\ \ \forall\,j\inn J.
\end{gather*}
This shows $\psi_2\!:\ti\fM^\wt\!\lra\!\mtd{}$ is a morphism.
\end{proof}

\begin{rmk}
In~\cite{HL11},
another resolution $\ti\fM^{\tn{dr}}\!\lra\!\mwt$,
called the \ts{derived resolution} of $\mwt$,
is constructed for the purpose of diagonalizing certain direct image sheaves.
That resolution is ``smaller'' in that the resolution $\tmwt\!\lra\!\mwt$  of~\cite{HL10} factors through $\ti\fM^{\tn{dr}}\!\lra\!\mwt$.
Mimicking the approach of \S3,
we may construct a moduli stack
$$
 \fN=\bigsqcup_{[\lt]\in\sT_\nL^\wt}\!\!\!\fN_{[\lt]},\qquad
 \fN_{[\lt]}=
 \mathring\P\Big(\!
   \bigoplus_{
   \begin{subarray}{c}
   e\in\Edg(\lt),\,
   \ell(v_e^-)=\cht(\lt)
   \end{subarray}
   }\!\!\!\!\!\!\!\!\!\!\!\!\!\!
   L_e^\succeq\ \ \Big)
  \lra\mwt_{\ff[\lt]}.
$$ 
This moduli should be isomorphic to $\ti\fM^{\tn{dr}}$. \\
\end{rmk}

\end{document}